\theoremstyle{definition}
\newtheorem{theorem}{Theorem}[section]
\newtheorem{corollary}[theorem]{Corollary}
\newtheorem{proposition}[theorem]{Proposition}
\newtheorem{definition}[theorem]{Definition}
\newtheorem{remark}[theorem]{Remark}
\newtheorem*{acknowledgements}{Acknowledgements}
\numberwithin{equation}{section}
\numberwithin{theorem}{section}
\definecolor{darkgreen}{rgb}{0,0.5,0}
\definecolor{darkblue}{rgb}{0,0.1,0.5}
\newcommand{\Uq}[1]{U_q(#1)}
\newcommand{\UH}[1]{U^{H}_q(#1)}
\newcommand{\UHbar}[1]{\overline{U}_q^{H}(#1)}
\newcommand{\mfg}{\mathfrak{g}}
\newcommand{\mfh}{\mathfrak{h}}
\newcommand{\mft}{\mathfrak{t}}
\newcommand{\mcB}{\mathcal{B}}
\newcommand{\mcC}{\mathcal{C}}
\newcommand{\mcG}{\mathcal{G}}
\newcommand{\mcH}{\mathcal{H}}
\newcommand{\mcO}{\mathcal{O}}
\newcommand{\mcX}{\mathcal{X}}
\newcommand{\mbbC}{\mathbb{C}}
\newcommand{\mbbZ}{\mathbb{Z}}
\newcommand{\C}{\mathcal{C}}
\newcommand{\cartan}{\mathfrak{h}}
\newcommand{\Verma}[1]{M^{#1}}
\newcommand\doi[2]{\href{http://dx.doi.org/#1}{#2}}
\title{Categories of Weight Modules for Unrolled Restricted Quantum Groups at Roots of Unity}
\author{Matthew Rupert}
\date{}
\begin{document}
\maketitle

\begin{abstract}
Motivated by connections to the singlet vertex operator algebra in the $\mfg=\mathfrak{sl}_2$ case, we study the unrolled restricted quantum group $\overline{U}_q^H(\mfg)$ for any finite dimensional complex simple Lie algebra $\mfg$ at arbitrary roots of unity with a focus on its category of weight modules. We show that the braid group action naturally extends to the unrolled quantum groups and that the category of weight modules is a generically semi-simple ribbon category (previously known only for odd roots) with trivial M\"{u}ger center and self-dual projective modules.
\end{abstract}

\tableofcontents
 
\setlength{\parskip}{\baselineskip}%

\section{Introduction}

The unrolled quantum group $U^H_q(\mathfrak{sl}_2)$ was introduced in \cite[Subsection 6.3]{GPT1} inspired by previous work of Ohtsuki \cite{O}. This algebra was initially studied as an example for producing link invariants \cite{GPT1,GPT2}. In higher rank, the quantum groups $U^H_q(\mfg)$ associated to a simple finite dimensional complex Lie algebra $\mathfrak{g}$ have been studied at odd roots of unity, also mostly out of interest for their topological applications. It was shown in \cite{GP1,GP2} that the category $\C$ (see Definition \ref{weightmod}) of finite dimensional weight modules for $U^H_q(\mathfrak{g})$ at odd roots is ribbon and generically semi-simple (see Definition \ref{genss}) and connections between the unrolled quantum groups, knot invariants, and topological quantum field theories have been explored in \cite{BCGP,D,DGP}. Our primary motivation, however, is the expected connections between the unrolled restricted quantum group $\overline{U}_q^H(\mfg)$ at even roots of unity and a family of vertex operator algebras (VOAs) known as the higher rank singlet algebras. Although these quantum groups have interesting topological applications at any root of unity, connections to VOAs are only known to occur at even roots.

The (rank one) singlet vertex operator algebras $\mathcal{M}(p)$ are a very prominent family of VOAs studied by many authors \cite{A,AM1,AM3,AM4,CM2,CMW}. The fusion rules for simple $\mathcal{M}(p)$ modules are known for $p=2$, and there is a conjecture for $p>2$ \cite{CM1}. It was shown in \cite{CMR} that if the fusion rules for simple $\mathcal{M}(p)$-modules are as conjectured, then there exists an identification of simple $\overline{U}_q^H(\mathfrak{sl}_2)$ and $\mathcal{M}(p)$ modules which induces a ring isomorphism of the associated Grothendieck rings. Further, modified traces of the open Hopf links for $\overline{U}_q^H(\mathfrak{sl}_2)$ coincide exactly with the regularized asymptotic dimensions of characters for the singlet (see \cite[Theorem 1]{CMR}). There are a number of vertex operator algebras which can be constructed from the singlet, among them are the triplet $\mathcal{W}(p)$ and $B_p$ VOAs. It is possible to use the correspondence from \cite{CMR} between $\mathcal{M}(p)$ and $\overline{U}_q^H(\mathfrak{sl}_2)$ modules to construct new braided tensor categories which compare nicely to the module categories of the triplet and $B_p$ algebras \cite{CGR,ACKR}. The singlet, triplet, and $B_p$ algebras all have higher rank analogues denoted $\mathcal{W}^0(p)_Q$, $\mathcal{W}(p)_Q$, and $B(p)_Q$ respectively (see \cite{BM,FT,Mi,CM2} for $\mathcal{W}^0(p)_Q$ and $\mathcal{W}(p)_Q$, and \cite{C} for $B(p)_Q$), where $p \geq 2$ and $Q$ is the root lattice of a simple finite dimensional complex Lie algebra $\mfg$ of ADE type. As in the $\mathfrak{sl}_2$-case, their representation categories are expected to coincide with categories constructed from the category of weight modules of the corresponding unrolled restricted quantum group $\overline{U}_q^H(\mfg)$ at $2p$-th root of unity and some preliminary results in this direction can be found in \cite{FL,GLO,Len3,N}. Understanding this category is therefore prerequisite to many interesting problems relating to the $\mathcal{W}^0(p)_Q$, $\mathcal{W}(p)_Q$, and $B(p)_Q$ vertex algebras. The focus of this article is to study the category of weight modules $\C$ over $\overline{U}_q^H(\mfg)$ at arbitrary roots of unity, and extend results known for $\overline{U}_q^H(\mathfrak{sl}_2)$ at even roots and $\overline{U}_q^H(\mathfrak{g})$ at odd roots to this setting.

\subsection{Results}

Let $\mfg$ be a finite dimensional complex simple Lie algebra. Let $q$ be a primitive $\ell$-th root of unity, $r=\frac{3+(-1)^{\ell}}{4}\ell$ and $\mcC$ the category of finite dimensional weight modules for the unrolled restricted quantum group $\overline{U}_q^H(\mfg)$ (see Definition \ref{restricteddef} and the opening comments of Section \ref{Repsection}). We describe in Section \ref{sectionquantumgroup} how to construct the unrolled quantum groups as a semi-direct product $U_q^H(\mfg):= U_{q}(\mfg) \rtimes U(\mfh)$ of the De Concini - Kac specialization $U_q(\mfg)$ and the universal enveloping algebra of the Cartan subalgebra $\mfh$ of $\mfg$. We also show that the action of the braid group $\mcB_{\mfg}$ (see Definition \ref{braid}) extends naturally from $U_q(\mfg)$ to $U_q^H(\mfg)$ (Proposition \ref{dbraid}):
\begin{proposition}
The action of the braid group $\mathcal{B}_{\mfg}$ on $U_q(\mfg)$ can be extended naturally to the unrolled quantum group $U_q^H(\mfg)$.
\end{proposition}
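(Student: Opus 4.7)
The plan is to define the action of each generator $T_i$ of $\mathcal{B}_{\mfg}$ on the new Cartan generators $H_j \in U(\mfh) \subset U_q^H(\mfg)$ via the Weyl group action and then verify compatibility with the semi-direct product relations. Concretely, since Lusztig's automorphism $T_i$ acts on the group-like generators by $T_i(K_j) = K_j K_i^{-a_{ij}}$, and since one expects the identification $K_j \leftrightarrow q^{d_j H_j}$ to intertwine the two actions, the formula should be $T_i(H_j) = H_j - a_{ji} H_i$ (equivalently, $T_i|_{\mfh} = s_i$, the simple reflection acting on the coroot lattice). This is consistent with $T_i(K_j) = K_j K_i^{-a_{ij}}$ via the symmetrization identity $d_j a_{ji} = d_i a_{ij}$.

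With this definition in hand, to produce a well-defined algebra automorphism of $U_q^H(\mfg) = U_q(\mfg)\rtimes U(\mfh)$ I would verify that $T_i$ preserves each defining relation. The relations internal to $U_q(\mfg)$ are preserved by Lusztig's result, and the internal relations $[H_j,H_k]=0$ in $U(\mfh)$ are preserved since $s_i$ is a linear automorphism of $\mfh$. The only real content is in the cross relations
\begin{equation*}
[H_j, E_k] = \alpha_k(H_j)\, E_k, \qquad [H_j, F_k] = -\alpha_k(H_j)\, F_k,
\end{equation*}
which under $T_i$ become statements about $[s_i(H_j), T_i(E_k)]$ and $[s_i(H_j), T_i(F_k)]$. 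Splitting into the cases $k=i$ and $k\neq i$ (and using $T_i(E_i) = -F_i K_i$, $T_i(E_k)$ given by a $q$-commutator involving $E_i$ and $E_k$ for $k \neq i$, etc.), each case reduces to a finite linear identity in the Cartan weights that is equivalent to the Weyl-equivariance of the root-coroot pairing $\alpha_k(s_i(H_j)) = (s_i \alpha_k)(H_j)$. The case $k=i$ is the most delicate because one picks up a sign from $T_i(E_i)$ being (proportional to) $F_i K_i$; this is precisely where the choice of $a_{ji}$ (as opposed to $a_{ij}$) in the formula for $T_i(H_j)$ is forced, and constitutes the main point that needs verification.

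Finally, the braid relations for the extended $T_i$ follow from the braid relations for $\{s_i\}$ on $\mfh$ (which are classical) together with the braid relations for Lusztig's $T_i$ on $U_q(\mfg)$, since the two pieces generate $U_q^H(\mfg)$ as an algebra and the extension was defined to be compatible on each factor. I expect the main obstacle to be purely bookkeeping: tracking the conventions used for $T_i$ on $U_q(\mfg)$ (there are several in the literature differing by sign and by $E_i \leftrightarrow F_i$) so that the formula $T_i(H_j) = H_j - a_{ji} H_i$ really does yield an automorphism, rather than any deeper structural difficulty.
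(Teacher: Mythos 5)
Your proposal arrives at exactly the formula the paper uses, $T_i(H_j) = H_j - a_{ji}H_i$, and the overall strategy (extend $T_i$ to $U(\mfh)$, verify the cross relations $[T_i(H_j), T_i(X_{\pm k})] = \pm a_{jk}T_i(X_{\pm k})$, then confirm the braid relations on the $H_j$) is the same as the paper's. Where you improve on the paper's exposition is in packaging the verifications conceptually: you note that $T_i|_{\mfh}$ is the simple reflection $s_i$, that each $T_i(X_{\pm k})$ has $\mfh$-weight $s_i(\pm\alpha_k)$, and hence that the cross relations collapse to the $W$-invariance identity $\alpha_k(s_iH_j) = (s_i\alpha_k)(H_j)$, while the braid relations on the $H_j$ are just the Coxeter relations for $\{s_i\}$ acting on $\mfh$. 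The paper instead does the two cases $k=j$ and $k\neq j$ by a direct index computation and says to verify the braid relations by computing $T_iT_j\cdots(H_k)$ and checking symmetry in $i,j$; your version reaches the same endpoint with less bookkeeping and makes transparent why the $a_{ji}$ (rather than $a_{ij}$) appears. Both proofs are correct, and neither requires more than the stated facts about Lusztig's automorphisms on $U_q(\mfg)$.
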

This statement is known to some, but hasn't appeared in the literature to the author's knowledge. It has been shown previously (see \cite{CGP2}) that there is a generically semi-simple structure (see Definition \ref{genss}) on the category of weight modules over the restricted unrolled quantum group at odd roots of unity where $\ell \not \in 3\mathbb{Z}$ if $\mfg=G_2$. The purpose of this restriction on the $G_2$ case is to guarantee that $gcd(d_i,r)= 1$ where the $d_i=\frac{1}{2} \langle \alpha_i,\alpha_i \rangle$ are the integers symmetrizing the Cartan matrix. This condition fails at odd roots only for $G_2$, but for even roots all non ADE-type Lie algebras fail this condition for some choice of $\ell$. We show that when $gcd(d_i,r) \not = 1$, generic semi-simplicity can be retained if one quotients by a larger Hopf ideal (see Definition \ref{restricteddef}). In Subsection \ref{sectioncategoricaldata}, we observe that $\mcC$ being ribbon follows easily from the techniques developed in \cite{GP2} as in the case for odd roots, and we show that $\mcC$ has trivial M\"{u}ger center (see Definition \ref{muger}). We therefore have the following (Propositions \ref{gens}, \ref{Muger}, and Corollary \ref{ribbon}):
\begin{theorem}
$\mcC$ is a generically semi-simple ribbon category with trivial M\"{u}ger center.
\end{theorem}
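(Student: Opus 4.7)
The plan is to establish the three claims separately, one for each of the cited results, all leveraging the semi-direct product structure $\overline{U}_q^H(\mfg) = \overline{U}_q(\mfg) \rtimes U(\mfh)$ and the proposition above, which extends the braid group action to the unrolled setting.

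For generic semi-simplicity, I would first define the notion of a typical weight $\la \in \mfh^*$: namely one for which certain "Shapovalov-type" denominators (products running over positive roots $\al$) of the form $[\langle \la + \rho, \al^\vee \rangle]_{q_\al}$ do not vanish, where $q_\al = q^{d_\al}$. The subset of typical weights should form the complement of a countable union of affine hyperplanes, hence be open and dense. I would then argue that the Verma module $\Verma{\la}$ at a typical weight is irreducible by showing the Shapovalov form is nondegenerate on each weight space, as in \cite{CGP2}. The new point at non-coprime $\gcd(d_i, r)$ is to verify that, after quotienting by the enlarged Hopf ideal of Definition \ref{restricteddef}, the relevant quantum integers remain nonzero for a dense set of weights; this is the main obstacle, and the reason one must pass to the larger ideal — otherwise the structure constants of highest weight modules degenerate on a set of positive measure in $\mfh^*$, destroying genericity. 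Once Verma modules are generically simple, standard Hopf-algebra arguments identify them with the projective covers at typical weights and yield the block decomposition needed for generic semi-simplicity.

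For the ribbon structure, I would follow the template of \cite{GP2}: the braided structure is obtained by defining the action of the universal $R$-matrix from $U_q(\mfg)$ on weight modules via the formula
\[
R = \bar{R} \cdot q^{\sum_{i,j} (A^{-1})_{ij} d_j H_i \otimes H_j},
\]
where $\bar R$ is the quasi-$R$-matrix part (a well-defined operator on tensor products of weight modules since each weight space is finite dimensional and nilpotent root vectors act locally nilpotently), and $A$ is the Cartan matrix. The pivot is $K_{2\rho}$ and the ribbon element arises from the standard Drinfeld twist, whose existence on $\C$ reduces to verifying compatibility of these operators with the ideal relations, which is a direct calculation using the extended braid group action established in Proposition \ref{dbraid}.

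For triviality of the Müger center, I would take an object $X \in \C$ transparent with respect to all objects of $\C$ and test against a one-dimensional weight module $\CC_\la$ of weight $\la$, computing the double braiding as multiplication by $q^{2\langle \la, \mu \rangle}$ on the $\mu$-weight space of $X$. Transparency forces $q^{2\langle \la, \mu \rangle} = 1$ for all $\la$ in the appropriate parameter set, which (since this set contains a $\mathbb{Z}$-basis of $\mfh^*$ of "generic" elements) implies $\mu = 0$, so $X$ is supported in weight zero. A second test against a typical simple module then shows the action of the root vectors must also be trivial, so $X$ is a direct sum of copies of the unit. The main obstacle here is to ensure that sufficiently many weights $\la$ give rise to objects of $\C$ (i.e. are of the form required by Definition \ref{weightmod}), which is automatic at arbitrary roots once the weight category is set up correctly.
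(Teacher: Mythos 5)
Your overall plan tracks the paper's structure, but there are three concrete gaps that would need to be closed.

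\textbf{Ribbon structure.} The paper's pivot is $K_{2\rho}^{1-r}$ (the $(1-r)$ exponent matters: it changes quantum dimensions and twists, even though both $K_{2\rho}$ and $K_{2\rho}^{1-r}$ conjugate the same way), so your ``pivot is $K_{2\rho}$'' is off. More seriously, the proposal to produce the ribbon element ``from the standard Drinfeld twist'' does not apply here: the Cartan factor $\mathcal H = q^{\sum_{i,j}d_i(A^{-1})_{ij}H_i\otimes H_j}$ is only an \emph{operator} on tensor products of weight modules, not an element of $\overline{U}_q^H(\mfg)\otimes\overline{U}_q^H(\mfg)$, so there is no Drinfeld element $u=m(S\otimes\mathrm{Id})(R_{21})$ inside the algebra to manipulate. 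This is precisely why the paper first establishes generic $\mft$-semisimplicity and then invokes \cite[Theorem 9]{GP2}, which obtains a ribbon structure from the pivotal braided structure by only checking $\theta_{(L^\lambda)^*}=(\theta_{L^\lambda})^*$ on the generic simples. Your route bypasses this and lands on a construction that does not exist in the unrolled setting.

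\textbf{Generic semi-simplicity.} ``Open and dense set of typical weights'' is not the same as generic $\mcG$-semisimplicity in the sense of Definition \ref{genss}, which is what feeds into the ribbon theorem. You need the $\mft=\mfh^*/L_R$-grading and a proof that the singular locus $\mcX\subset\mft$ is symmetric and \emph{small} (not coverable by finitely many translates). The paper produces an explicit family of classes $\overline{\mu^a}$ with purely imaginary $(\mu^a)_\alpha$ to show smallness, and uses the $X_+X_-$ scalar-action argument (from \cite{CGP2}) to split surjections onto typical Vermas. ``Standard Hopf-algebra arguments'' does not fill this in; the grading and smallness are the content.

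\textbf{M\"uger center.} Testing transparency against one-dimensional modules $\CC_\lambda$ is insufficient: as noted in the Remark after Proposition \ref{typ}, one-dimensional objects exist only for $\lambda(H_i)\in\frac{\ell}{2d_i}\mathbb Z$, a lattice. Forcing $q^{2\langle\lambda,\mu\rangle}=1$ only for $\lambda$ in that lattice constrains $\mu$ to a dual lattice, not to $\mu=0$. The paper instead braids two arbitrary highest-weight vectors $v_\lambda\otimes v_\mu$ with $\mu$ ranging over all of $\mfh^*$ (every $\mu$ occurs as the highest weight of some $L^\mu$), which genuinely forces $\lambda=0$. After that it still needs the extra step (which you also need): a transparent object has all composition factors $\cong\mathds 1$, hence vanishing character, and a module in $\mcC$ with vanishing character is a direct sum of copies of $\mathds 1$ because the $H_i$ act semisimply and the $K_\gamma$ are determined by them.

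In short: the typicality/Shapovalov picture and the $R$-matrix formula are right, but the ribbon argument needs to be rerouted through generic semi-simplicity (not a Drinfeld twist), the generic semi-simplicity needs the $\mft$-grading and smallness, and the M\"uger test must use highest-weight vectors of arbitrary weight rather than only one-dimensional modules.
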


$\overline{U}_q^H(\mathfrak{sl}_2)$ was studied at even roots of unity in \cite{CGP}. Stated therein  (\cite[Proposition 6.1]{CGP}) is a generator and relations description of the projective covers of irreducible modules. One apparent property of these projective covers is that their top and socle coincide. Showing that this is a general feature of projective covers in $\mcC$ is the topic of Subsection \ref{duality}. Let $L^{\lambda}$ denote the unique irreducible highest weight module of highest weight $\lambda \in \mfh^*$ and $P^{\lambda}$ its projective cover. We introduce a character preserving contravariant functor $M \mapsto \check{M}$, analogous to the duality functor for Lie algebras (\cite[Subsection 3.2]{H}). We are then able to prove the following theorem:

\begin{theorem}\label{selfdual}
The projective covers $P^{\lambda}$ are self-dual under the duality functor. That is, $\check{P^{\lambda}} \cong P^{\lambda}$.
\end{theorem}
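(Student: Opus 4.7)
The plan is to mimic the duality construction on category $\mathcal{O}$ for a semisimple Lie algebra (Humphreys, \S 3.2), replacing the Chevalley anti-involution by its quantum counterpart. First I would exhibit an algebra anti-involution $\tau$ of $\overline{U}_q^H(\mfg)$ that fixes the Cartan generators $K_i$ and $H_i$ and swaps $E_i$ with $F_i$, possibly up to the Cartan corrections needed to preserve the $q$-Serre relations and the ideal defining the restricted quotient of Definition \ref{restricteddef}. Then for $M \in \mcC$, the duality is $\check{M} := \Hom_{\CC}(M,\CC)$, well-defined because weight spaces are finite dimensional, equipped with the action $(x \cdot f)(m) := f(\tau(x)m)$. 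Since $\tau$ is the identity on the Cartan, $(\check{M})_\la \cong (M_\la)^*$, so $M \mapsto \check{M}$ is exact, contravariant, character-preserving and involutive on $\mcC$.

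The next step is to show that $\check{L^{\la}} \cong L^{\la}$: $\check{\cdot}$ carries simples to simples by contravariance and exactness, and the resulting simple has character equal to $\ch[L^{\la}]$, which, together with the classification of simple weight modules recalled in Section \ref{Repsection}, forces the isomorphism. Applying $\check{\cdot}$ to the projective cover $P^{\la} \onto L^{\la}$ therefore yields an embedding $L^{\la} \cong \check{L^{\la}} \into \check{P^{\la}}$, so $\check{P^{\la}}$ is indecomposable with simple socle $L^{\la}$ and $\ch[\check{P^{\la}}] = \ch[P^{\la}]$.

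It remains to identify $\check{P^\la}$ with $P^\la$. Because the functor is exact and contravariant it automatically sends projectives to injectives, so the theorem reduces to showing that indecomposable injectives and projectives coincide in $\mcC$: given this, $\check{P^{\la}}$ is an indecomposable projective with simple socle $L^{\la}$, and the character identity $\ch[\check{P^{\la}}] = \ch[P^{\la}]$ pins down the label $\la$ because the unique maximal weight in $\ch[P^{\mu}]$ is exactly $\mu$. The self-injectivity step is what I expect to be the main obstacle. My plan is to reduce to a block-by-block statement: fix a coset of $\cartan^*$ closed under the root-lattice shift and argue that the corresponding full subcategory of $\mcC$ is equivalent to the module category of a finite-dimensional algebra assembled from the restricted Hopf algebra $\overline{U}_q(\mfg)$, which is Frobenius by Larson--Sweedler. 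The unrolling only adds a semisimple Cartan action that respects this decomposition, so Frobenius-ness on each slice passes to $\mcC$ itself, and the theorem follows.
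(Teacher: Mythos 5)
Your overall strategy tracks the paper's closely: define a character-preserving exact contravariant duality $M \mapsto \check{M}$ via a quantum Chevalley-type anti-involution, check $\check{L}^\lambda \cong L^\lambda$, reduce the theorem to the fact that projective and injective objects coincide in $\mcC$, and close by identifying the label via characters. The paper proves the same theorem by a slightly different bookkeeping (it writes a presentation $\bigoplus_k P^{\lambda_k} \onto \check{P}^\lambda$, dualizes, and uses that the injective $P^\lambda$ splits off as a summand, then invokes Krull--Schmidt), but both arguments hinge on exactly two inputs: (1) projectives equal injectives in $\mcC$, and (2) projectives are determined by their characters (the proposition preceding the theorem). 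So the structure is sound; the differences are in how you justify those two inputs.

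On (1), the paper simply invokes \cite[Lemma 17]{GPV}, which gives projective $=$ injective in a pivotal $\mathds{k}$-category; this is available here since $\mcC$ is pivotal. Your proposed route --- decompose $\mcC$ into blocks, identify each block with $A$-mod for a finite-dimensional Frobenius algebra $A$ built from $\overline{U}_q(\mfg)$, and deduce self-injectivity --- is a genuinely different and much heavier path, and as sketched it has a real gap: the blocks $\mcC_{\overline{\lambda}}$, $\overline{\lambda} \in \mfh^*/L_R$, have \emph{infinitely} many simples (one $L^\mu$ for each $\mu \in \lambda + L_R$), so they are not module categories over finite-dimensional algebras. You would need to refine further to linkage classes, prove these are finite, and then relate the endomorphism algebra of a projective generator of a linkage class to $\overline{U}_q(\mfg)$ --- none of which is immediate, and the Larson--Sweedler Frobenius property of $\overline{U}_q(\mfg)$ does not transfer to such endomorphism algebras without an argument. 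The pivotal-category lemma sidesteps all of this.

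On (2), your justification that ``the unique maximal weight in $\mathrm{ch}[P^\mu]$ is exactly $\mu$'' is false: by BGG reciprocity $(P^\mu : M^\nu) = [M^\nu : L^\mu]$, which is nonzero only for $\nu \geq \mu$, so $\mu$ is the \emph{minimal} highest weight among the Vermas in the standard filtration, and $\mathrm{ch}[P^\mu]$ typically has weights strictly above $\mu$ (already visible in the $\mfsl{2}$ projective covers of \cite[Proposition 6.1]{CGP}). The correct way to pin down the label, and the one the paper uses, is to cite the proposition that projective modules in $\mcC$ are determined up to isomorphism by their characters: from $\check{P}^\lambda \cong P^{\mu}$ one gets $\mathrm{ch}[P^\lambda] = \mathrm{ch}[\check{P}^\lambda] = \mathrm{ch}[P^\mu]$, hence $\lambda = \mu$.
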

This theorem has the following corollaries (Corollary \ref{projcorollary} and \ref{tracecorollary}):\\
\begin{corollary}\label{cor}
\begin{itemize}
\item $\mathrm{Socle}(P^{\lambda})=L^{\lambda}$.
\item $P^{\lambda}$ is the injective Hull of $L^{\lambda}$.
\item $\mcC$ is unimodular.
\item $\mcC$ admits a unique (up to scalar) two-sided trace on its ideal of projective modules.\\
\end{itemize}
\end{corollary}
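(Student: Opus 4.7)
The plan is to derive all four assertions simultaneously from Theorem \ref{selfdual} together with the standard formal properties of the contravariant, exact, character-preserving duality functor $M \mapsto \check{M}$ introduced in Subsection \ref{duality}. The starting observation is that $\check{L^{\lambda}} \cong L^{\lambda}$: since $(\,\check{\,}\,)$ is exact and contravariant it sends simples to simples, and since it preserves characters while simple weight modules are distinguished up to isomorphism by their characters, each $L^{\lambda}$ must be fixed.

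For the first bullet, I would use the fact that any exact contravariant endofunctor exchanges the top and the socle, i.e.\ $\mathrm{Socle}(\check{M}) \cong \check{(M/\mathrm{rad}\,M)}$. Applying this to $M = P^{\lambda}$, whose top is $L^{\lambda}$ by definition of projective cover, and combining with $\check{L^{\lambda}} \cong L^{\lambda}$ and Theorem \ref{selfdual}, one obtains $\mathrm{Socle}(P^{\lambda}) \cong L^{\lambda}$. The second bullet follows from the same principle lifted to universal properties: a contravariant exact duality swaps projective covers with injective envelopes, so $\check{P^{\lambda}}$ is the injective hull of $\check{L^{\lambda}} \cong L^{\lambda}$, and the self-duality $\check{P^{\lambda}} \cong P^{\lambda}$ identifies $P^{\lambda}$ itself as this injective hull.

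For the third bullet, I would invoke the characterization of unimodularity via the socle of the projective cover of the unit: $\mcC$ is unimodular precisely when $\mathrm{Socle}(P^{0}) \cong L^{0} = \mathbf{1}$, which is the $\lambda = 0$ case of the first bullet. The fourth bullet is then a citation to the theorem of Geer--Kujawa--Patureau-Mirand that a unimodular pivotal (in particular ribbon) category with enough projectives admits a unique (up to scalar) two-sided modified trace on its tensor ideal of projective modules; the ribbon structure comes from the preceding theorem in the excerpt and unimodularity was just established.

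The main obstacle is verifying the foundational identity $\check{L^{\lambda}} \cong L^{\lambda}$ and the categorical properties of $(\,\check{\,}\,)$ (exactness, character preservation, compatibility with highest-weight structure) that one needs in order to invoke the top/socle swap and the projective/injective swap. Once these properties of the duality functor from Subsection \ref{duality} are in hand, each of the four assertions reduces to a clean formal consequence of Theorem \ref{selfdual} or a standard citation to the literature on modified traces.
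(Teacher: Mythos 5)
Your proposal is essentially the paper's own argument, just with the ``immediate'' steps spelled out. The paper also derives the first three bullets as direct consequences of Theorem~\ref{selfdual} using the duality functor (the fact that $\check{L^\lambda}\cong L^\lambda$ is already recorded as the last item of Proposition~\ref{dfunc}), characterizes unimodularity via $P^0$ being the injective hull of $\mathds{1}$, and then invokes the literature on modified traces for the last bullet.

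One small point of divergence worth flagging on the fourth bullet: the paper does not obtain the two-sided trace from a single citation. It first cites \cite[Corollary 3.2.1]{GKP} to get a \emph{right} trace on the ideal of projectives from unimodularity, and then separately invokes the argument of \cite[Theorem~22]{GP2} (which uses the ribbon structure established in Corollary~\ref{ribbon}) to upgrade this right trace to a two-sided one. Your proposal compresses this into a single reference to Geer--Kujawa--Patureau-Mirand asserting a unique two-sided trace directly, which slightly overstates what that reference gives; you correctly note that the ribbon structure is relevant, but the role it plays is precisely to furnish the two-sidedness in the second step. Your formal arguments that an exact contravariant duality swaps tops with socles and projective covers with injective hulls are correct and are exactly the content the paper is relying on when it calls Corollary~\ref{projcorollary} ``immediate.''
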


\begin{acknowledgements} \quad \\
The author thanks Thomas Creutzig for many helpful comments and discussions.
\end{acknowledgements}

\section{\textbf{Preliminaries}}\label{prelim}

Let $\mathds{k}$ be a field. A $\mathds{k}$-category is a category $\mcC$ such that its Hom-sets are left $\mathds{k}$-modules, and morphism composition is $\mathds{k}$-bilinear. A pivotal $\mathds{k}$-linear category $\mcC$ (see \cite[Subsection 4.7]{EGNO} for a definition) is said to be $\mcG$-graded for some group $\mcG$ if for each $g \in \mcG$ we have a non-empty full subcategory $\mcC_g$ stable under retract such that 
\begin{itemize}
\item $\mcC= \bigoplus\limits_{g \in \mcG} \mcC_g$,
\item $V \in \mcC_g \implies V^* \in \mcC_{g^{-1}}$,
\item $V \in \mcC_g$ and $W \in \mcC_{g'} \implies V \otimes W \in \mcC_{gg'}$,
\item $V \in \mcC_g,W \in \mcC_{g'}$ and $\mathrm{Hom}(V,W) \not =0 \implies g=g'$.
\end{itemize}

where $V^*$ denotes a dual object of $V$. A subset $\mcX \subset \mcG$ is called symmetric if $\mcX^{-1}=\mcX$ and small if it cannot cover $\mcG$ by finitely many translations, i.e. for any $n \in \mathbb{N}$ and $\forall g_1,...,g_n \in \mcG$, $\bigcup\limits_{i=1}^n g_i \mcX \not = \mcG$.
\begin{definition}\label{genss} A $\mcG$-graded category $\mcC$ is called generically $\mcG$-semisimple if there exists a small symmetric subset $\mcX \subset \mcG$ such that for all $g \in \mcG \setminus \mcX$, $\mcC_g$ is semisimple. $\mcX$ is referred to as the singular locus of $\mcC$ and simple objects in $\mcC_g$ with $g \in \mcG \setminus \mcX$ are called generic. 
\end{definition}
Generically semisimple categories appeared in \cite{GP1,CGP} and were used in \cite{GP2} to prove that representation categories of unrolled quantum groups at odd roots of unity are ribbon. If $\mcC$ has braiding $c_{-,-}$, then an object $Y \in \mcC$ is said to be transparent if $c_{Y,X} \circ c_{X,Y} = \mathrm{Id}_{X \otimes Y}$ for all $X \in \mcC$. 
\begin{definition}\label{muger}
The M\"{u}ger center of $\C$ is the full subcategory of $\C$ consisting of all transparent objects.
\end{definition}
Triviality of the M\"{u}ger center should be viewed as a non-degeneracy condition. Indeed, for finite braided tensor categories triviality of the M\"{u}ger center is equivalent to the usual notions of non-degeneracy (see \cite[Theorem 1.1]{S}).

\section{The Unrolled Restricted Quantum Group $\overline{U}_q^H(\mfg)$}\label{sectionquantumgroup}

We first fix our notations. Let $\mfg$ be a simple finite dimensional complex Lie algebra of rank $n$ and dimension $n+2N$ where $N$ is the number of positive roots of $\mfg$ with Cartan martix $A=(a_{ij})_{1 \leq i,j \leq n}$ and Cartan subalgebra $\mfh$. Let $\Delta:=\{\alpha_1,...,\alpha_n\} \subset \mfh^*$ be a set of simple roots of $\mfg$, $\Delta^+$ ($\Delta^-$) the set of positive (negative) roots, and $L_R:=\bigoplus\limits_{i=1}^n \mbbZ \alpha_i$ the integer root lattice. Let $\{H_1,...,H_n\}$ be the basis of $\mfh$ such that $\alpha_j(H_i)=a_{ij}$ and $\langle , \rangle$ the form defined by $\langle \alpha_i,\alpha_j \rangle=d_ia_{ij}$ where $d_i=\langle \alpha_i, \alpha_i \rangle /2$ and normalized such that short roots have length 2. Let $L_W:= \bigoplus\limits_{i=1}^n \mbbZ \omega_i$ be the weight lattice generated by the dual basis $\{\omega_1,...,\omega_n\} \subset \mfh^*$ of $\{d_1H_1,...,d_nH_n\} \subset \mfh$, and $\rho:= \frac{1}{2} \sum\limits_{\alpha \in \Delta^+} \alpha \in L_W$ the Weyl vector.

Now, let $q \in \mathbb{C}^{\times}$ with $q_i=q^{d_i}$, and fix the notation
\begin{equation}\{x\}=q^x-q^{-x}, \quad  [x]=\frac{q^x-q^{-x}}{q-q^{-1}}, \quad [n]!=[n][n-1]...[1], \quad \binom{n}{m}=\frac{\{n\}!}{\{m\}!\{n-m\}!} ,
\end{equation}
\begin{equation}
 [j;q]=\frac{1-q^j}{1-q}, \qquad [j;q]!=[j;q][j-1;q] \cdots [1;q] \label{jq}.
\end{equation}
We will often use a subscript $i$, e.g. $[x]_i$, to denote the substitution $q \mapsto q_i$ in the above formulas.
 \begin{definition}\label{uqh}
Let $L$ be a lattice such that $L_R \subset L \subset L_W$. The unrolled quantum group $U^H_{q}(\mfg)$ associated to $L$ and $q \in \mathbb{C}^{\times}$ with $\mathrm{ord}(q^2)>d_i$ for all $i \in \{1,...,n\}$ is the $\mbbC$-algebra with generators $K_{\gamma},X_i,X_{-i},H_i$, (we will often let $K_{\alpha_i}:=K_i$) with $i=1,...,n$, $\gamma \in L$, and relations
\begin{align}
\label{eqKX}K_0=1, \quad K_{\gamma_1}K_{\gamma_2}=K_{\gamma_1+\gamma_2},\quad K_{\gamma}X_{\pm j}K_{-\gamma}=q^{\pm \langle \gamma,\alpha_j \rangle}X_{\pm j}, \\
\label{eqH} [H_i,H_j]=0, \quad [H_i,K_{\gamma}]=0, \qquad \quad [H_i,X_{\pm j}]=\pm a_{ij} X_{\pm j }, \quad \\
\label{eqX} \quad  [X_i,X_{-j}]=\delta_{i,j}\frac{K_{\alpha_j}-K_{\alpha_j}^{-1}}{q_j-q_j^{-1}},\qquad \qquad \qquad \\
\label{serre} \sum\limits_{k=0}^{1-a_{ij}} (-1)^k \binom{1-a_{ij}}{k}_{q_i} X^k_{\pm i}X_{\pm j} X_{\pm i}^{1-a_{ij}-k}=0 \qquad \text{if $ i \not = j$},
\end{align}
There is a Hopf-algebra structure on $U^H_{q}(\mfg)$ with coproduct $\Delta$, counit $\epsilon$, and antipode $S$ defined by
\begin{align}
\label{coK}\Delta(K_{\gamma})&=K_{\gamma} \otimes K_{\gamma}, & \epsilon(K_{\gamma})&=1,&  S(K_{\gamma})&=K_{-\gamma}\\
\label{coXi}\Delta(X_i)&=1 \otimes X_i + X_i \otimes K_{\alpha_i} & \epsilon(X_i)&=0, & S(X_i)&=-X_iK_{-\alpha_i},\\
\label{coX-i}\Delta(X_{-i})&=K_{-\alpha_i} \otimes X_{-i}+X_{-i} \otimes 1, & \epsilon(X_{-i})&=0, & S(X_{-i})&=-K_{\alpha_i}X_{-i}.,\\
\label{coH} \Delta(H_i)&=1\otimes H_i + H_i \otimes 1,& \quad \epsilon(H_i)&=0, &\quad S(H_i)&=-H_i.
\end{align}
\end{definition}
\begin{remark}
The cases $\mathrm{ord}(q^2) \not > d_i$ are investigated for the small quantum group in \cite{Len1}.
\end{remark}

The unrolled quantum group has been previously studied at roots of unity in relation to quantum topology and vertex algebras \cite{GP1,GP2,GPT1,GPT2,CGP,ACKR,CMR}. Interesting connections to Lusztig's quantum group of divided powers were established in \cite{Len2}. It is easy to see that when $q$ is a root of unity, the subalgebra generated by $K_{\gamma}$ and $X_{\pm i}$ is the De Concini - Kac specialization $U_{q}(\mfg)$. The unrolled quantum group is actually a smash product of the De Concini - Kac specialization with the universal enveloping algebra of $\mfh$, which we will briefly recall. Let the generators $H_1,...,H_n \in \mfh$ act on $U_{q}(\mfg)$ by the derivation $\partial_{H_i}:U_{q}(\mfg) \to U_{q}(\mfg)$ defined by
\begin{equation}\label{deriv} \partial_{H_i}X_{\pm j} = \pm a_{ij} X_{ \pm j}, \qquad \partial_{H_i}K_{\gamma} =0. \end{equation}
It is easy to see that these operators commute, so they do indeed define an action of $\mfh$ on $U_{q}(\mfg)$. It is easy to check the relations
\begin{align*}
(\Delta \circ \partial_{H_i})(X_j)&=(\mathrm{Id} \otimes \partial_{H_i} + \partial_{H_i} \otimes \mathrm{Id}) \circ \Delta (X_j) \\
\quad & \\
(\Delta \circ \partial_{H_i})(X_{-j})&=(\mathrm{Id} \otimes \partial_{H_i} + \partial_{H_i} \otimes \mathrm{Id}) \circ \Delta(X_{-j})
\end{align*}

and $\Delta \circ \partial_{H_i} (K_{\gamma}) = 0 = (\partial_{H_i} \otimes 1 + 1 \otimes \partial_{H_i}) \circ \Delta(K_{\gamma})$, so $\mfh$ acts on $U_{q}(\mfg)$ by $\mbbC$-biderivations. It follows from \cite[Lemma 2.6]{AS} that $U_{q}(\mfg) \rtimes U(\mfh):= U_{q}(\mfg) \otimes U(\mfh)$ is a Hopf algebra with algebra structure coming from the smash product (the unit here is $1\otimes 1$) and coalgebra structure coming from the tensor product:
\begin{align*}
(X \otimes H) \cdot (Y \otimes H')&=X(\partial_{H_{(1)}}Y) \otimes H_{(2)}H'\\
\Delta(X\otimes H)&=(\mathrm{Id} \otimes \tau \otimes \mathrm{Id}) \circ (\Delta_{U_{q}(\mfg)} \otimes \Delta_{U(\mfh)}) (X \otimes H)\\
\epsilon&=\epsilon_{U_{q}(\mfg)} \otimes \epsilon_{U(\mfh)}
\end{align*}
where $\tau:U_{q}(\mfg) \otimes U(\mfh) \to U(\mfh) \otimes U_{q}(\mfg)$ is the usual flip map and $\Delta(H)=\sum\limits_{(H)} H_{(1)} \otimes H_{(2)}$ is the Sweedler notation for the coproduct. We then see that for any $X \in U_{q}(\mfg)$ and $H_i \in U(\mfh)$, we have
\begin{align*}
 (X \otimes 1) \cdot (1 \otimes H_i)&=X \otimes H_i & (X \otimes 1) \cdot (Y \otimes 1)&=XY \otimes 1\\
(1 \otimes H_i) \cdot (X \otimes 1)&=\partial_{H_i}(X) \otimes 1+X \otimes H_i &(1 \otimes H_i) \cdot (1 \otimes H_j) &= 1 \otimes H_iH_j,\\
\end{align*}
so $U_{q}(\mfg) \rtimes U(\mfh)$ is generated by the elements $\{ X_{\pm i} \otimes 1, K_{\gamma} \otimes 1, 1 \otimes H_i \, | \, i=1,...,n, \gamma \in L\}.$ By abuse of notation, we set $X_{\pm i}:=(X_{\pm_i} \otimes 1), K_{\gamma}:=(K_{\gamma} \otimes 1),$ and $H_i:=(1\otimes H_i)$, and we see that $U_{q}(\mfg) \rtimes U(\mfh)$ is generated by $X_{\pm i}, K_{\gamma}$ $(\gamma \in L)$, and $H_i$, $i=1,...,n$ with defining relations \eqref{eqKX}-\eqref{serre} and Hopf algebra structure given by equations \eqref{coK}-\eqref{coH}. This is precisely the unrolled quantum group of Definition \ref{uqh}.

\subsection{Braid Group Action on $U_{q}^H(\mfg)$}\label{subsectionPBW}

Recall that for a finite dimensional semisimple Lie algebra the scalars $a_{ij}a_{ji}$ are equal to $0,1,2,$ or $3$ for $i \not = j$ and for each case let $m_{ij}$ be $2,3,4,6$ respectively. Then,
\begin{definition}\label{dbraid}
The braid group $\mcB_{\mfg}$ associated to $\mfg$ has generators $T_i$ with $1 \leq i \leq n$ and defining relations
\[ T_iT_jT_i \cdots = T_jT_iT_j \cdots \]
for $i \not = j$ where each side of the equation is a product of $m_{ij}$ generators.
\end{definition}

It is well known (see \cite{KlS,CP}) that the braid group of $\mfg$ acts on the quantum group $U_{q}(\mfg)$ by algebra automorphisms defined as follows:
\begin{equation} \label{b1}T_i(K_j)=K_jK_i^{-a_{ij}}, \qquad T_i(X_i)=-X_{-i}K_i, \qquad T_i(X_{-i})=-K_i^{-1}X_i
\end{equation}
\begin{align}
\label{b2}T_i(X_j)&=\sum\limits_{t=0}^{-a_{ij}}(-1)^{t-a_{ij}}q_i^{-t}X_i^{(-a_{ij}-t)}X_jX_i^{(t)}\quad i \not = j,\\
\label{b3}T_i(X_{-j})&=\sum\limits_{t=0}^{-a_{ij}} (-1)^{t-a_{ij}}q_i^tX_{-i}^{(t)}X_{-j}X_{-i}^{(-a_{ij}-t)}, \quad i \not = j, 
\end{align}
where $X_{\pm i}^{(n)}=X_{\pm i}^n/[n]_{q_i}!$. Therefore, if we extend the action of $\mcB_{\mfg}$ to $U_{q}^H(\mfg)$ by
\[ T_i(H_j)=H_j-a_{ji}H_i, \]
it is enough to check that the automorphisms $T_i$, $i=1,...,n$ respect equations \eqref{eqH} and the braid group relations when acting on the $H_i$. We first note that the relations
\[ [T_k(H_i),T_k(H_j)]=[T_k(H_i),T_k(K_{\gamma})]=0\]
follow trivially from $[H_i,H_j]=[H_i,K_{\gamma}]=0$. We must therefore show that
\[ [T_k(H_i),T_k(X_{\pm j } )]=\pm a_{ij}T_k(X_{ \pm j}).\]
We will prove the statement for positive index, as the negative index case is identical. Suppose $k=j$, then $T_j(X_j)=-X_{-j}K_j$ and we have 
\[ H_k(-X_{-j}K_j)=-(X_{-j}H_k-a_{kj}X_{-j})K_j=-X_{-j}K_jH_k+a_{kj}X_{-j}K_j, \]
so we see that $[H_k,-X_{-j}K_j]=a_{kj}X_{-j}K_j$. Hence,
\begin{align*}
[T_j(H_i),T_j(X_j)]&=[H_i-a_{ij}H_j,-X_{-j}K_j]\\
&=[H_i,-X_{-j}K_j]-a_{ij}[H_j,-X_{-j}K_j]\\
&=a_{ij}X_{-j}K_j-2a_{ij}X_{-j}K_j\\
&=-a_{ij}X_{-j}K_j=a_{ij}T_j(X_j).
\end{align*}
Suppose now that $k \not = j$. We then see that we must show
\[ \sum\limits_{t=0}^{-a_{kj}} (-1)^{t-a_{kj}}q_k^{-t}[T_k(H_i),X_k^{(-a_{kj}-t)}X_jX_k^{(t)}]=\sum\limits_{t=0}^{-a_{kj}} (-1)^{t-a_{kj}}q_k^{-t}a_{ij}X_k^{(-a_{kj}-t)}X_jX_k^{(t)}.\]
Clearly then, it is enough to show that
\[ [T_k(H_i),X_k^{(-a_{kj}-t)}X_jX_k^{(t)}]=a_{ij}X_k^{(-a_{kj}-t)}X_jX_k^{(t)}\]
for each $t=0,...,-a_{kj}$. It follows easily from equation \eqref{eqH} that
\[ [H_i,X_k^{(-a_{kj}-t)}X_jX_k^{(t)}]=(-a_{kj}a_{ik}+a_{ij})X_k^{(-a_{kj}-t)}X_jX_k^{(t)}.\]
Therefore,
\begin{align*}
[T_k(H_i),X_k^{(-a_{kj}-t)}X_jX_k^{(t)}]&=[H_i,X_k^{(-a_{kj}-t)}X_jX_k^{(t)}]-a_{ik}[H_k,X_k^{(-a_{kj}-t)}X_jX_k^{(t)}]\\
&=(-a_{kj}a_{ik}+a_{ij}-a_{ik}(-2a_{kj}+a_{kj}))X_k^{(-a_{kj}-t)}X_jX_k^{(t)}\\
&=a_{ij}X_k^{(-a_{kj}-t)}X_jX_k^{(t)}.
\end{align*}
We see then that equations \eqref{b1}-\eqref{b4} define an automorphism of $U_{q}^H(\mfg)$. To show that these automorphisms give an action of the braid group, we need only show that they satisfy the braid relations as operators on $U_{q}^H(\mfg)$. We know these relations are satisfied for the elements of $U_{q}(\mfg)$, so we need only check the $H_i$. This amounts to showing that
\[ T_iT_j \cdots (H_k) = T_jT_i \cdots (H_k). \]
One therefore computes $T_iT_j \cdots (H_k)$ and checks that the result is symmetric in $i$ and $j$, giving the following proposition:

\begin{proposition}\label{braid}
The elements $T_i$ of the braid group $\mcB_{\mfg}$ act on $U_{q}^H(\mfg)$ by automorphisms given by relations \eqref{b1}-\eqref{b3} and
\begin{equation}
\label{b4} T_i(H_j)=H_j-a_{ji}H_i.
\end{equation}
\end{proposition}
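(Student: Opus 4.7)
The plan is to complete the verification begun in the preceding discussion. The text before the proposition has already shown that each $T_i$, defined on generators by \eqref{b1}--\eqref{b4}, extends to an algebra endomorphism of $\UH{\mfg}$: compatibility with the defining relations of $U_q(\mfg)$ is the classical result, and the only nontrivial compatibility with the Cartan relations \eqref{eqH} --- namely $[T_k(H_i), T_k(X_{\pm j})] = \pm a_{ij}T_k(X_{\pm j})$ --- has been verified directly in the cases $k=j$ and $k \neq j$. Invertibility of $T_i$ follows by constructing a two-sided inverse via the analogous Drinfeld--Jimbo formulas on $U_q(\mfg)$ together with the same formula $T_i^{-1}(H_j) = H_j - a_{ji}H_i$ on $\mfh$ (which is consistent because $T_i|_{\mfh}$ is an involution); verification on generators is immediate.

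It remains to show that the braid relations $T_iT_jT_i\cdots = T_jT_iT_j\cdots$ (with $m_{ij}$ factors on each side) hold as operators on $\UH{\mfg}$. Since both sides are algebra automorphisms and the equality already holds on $U_q(\mfg)$ by the classical result, it is enough to check equality on the Cartan generators $H_1,\ldots,H_n$. The cleanest way to do this is to observe that $T_i|_{\mfh}$ is precisely the simple Weyl reflection $s_i$ acting on $\mfh$: under the identification $H_j \leftrightarrow \alpha_j^\vee$, the formula $T_i(H_j) = H_j - a_{ji}H_i = H_j - \alpha_i(H_j)H_i$ is exactly $s_i(\alpha_j^\vee)$ in the coroot realization. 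The braid relations on $\mfh$ then follow from the standard Coxeter presentation of the Weyl group of $\mfg$. Alternatively one can simply iterate the definition to compute $T_iT_j\cdots(H_k)$ inside the subspace $\CC H_i \oplus \CC H_j \oplus \CC H_k$ and verify, case by case over $m_{ij} \in \{2,3,4,6\}$, that the resulting expression is symmetric in $i$ and $j$; this is routine finite linear algebra.

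The main obstacle is really just careful bookkeeping. The delicate compatibility check with \eqref{eqH} has been handled in the preceding text, and once one recognizes $T_i|_{\mfh}$ as the standard Weyl reflection, the braid relations on $\mfh$ reduce to a well-known statement about the Coxeter presentation of the Weyl group. The only nontrivial calculation remaining is the case-by-case verification if one prefers to avoid the Weyl-group interpretation, and even that amounts to elementary manipulations in a two-dimensional subspace of $\mfh$.
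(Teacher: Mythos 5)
Your verification of the algebra-homomorphism compatibility mirrors the paper's own in-text computation, so for that part the two are essentially the same. Where you genuinely diverge is in the treatment of the braid relations on $\mfh$: the paper simply asserts that one computes $T_iT_j\cdots(H_k)$ and checks that the result is symmetric in $i$ and $j$, i.e.\ it outsources the step to a case-by-case calculation over $m_{ij}\in\{2,3,4,6\}$. You instead observe that under the identification $H_j\leftrightarrow\alpha_j^\vee$, the formula $T_i(H_j)=H_j-a_{ji}H_i=H_j-\alpha_i(H_j)H_i$ is precisely $s_i(\alpha_j^\vee)$, so that $T_i\vert_{\mfh}$ is the simple Weyl reflection acting on the coroot lattice; the braid relations on $\mfh$ then come for free from the Coxeter presentation of $W$. (The sign conventions do line up: with the paper's choice $\alpha_j(H_i)=a_{ij}$ one has $\alpha_i(H_j)=a_{ji}$, matching \eqref{b4}.) This is cleaner and more conceptual than the paper's approach, and also makes transparent the well-known fact that the Lusztig symmetries restrict to the Weyl action on the Cartan part. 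Your additional remark on invertibility of $T_i$ is a reasonable point the paper leaves implicit. The argument is correct.
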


For the remainder of the article we assume that $q$ is an $\ell$-th root of unity with 
\begin{equation} \label{rl} r=\ell \text{ if $\ell$ is odd}, \qquad \qquad  r=\ell/2 \text{ if $\ell$ is even} \end{equation}
Let $W$ denote the Weyl group of $\mfg$ and $\{s_i|i=1,...,n\}$ the simple reflections generating $W$. Let $s_{i_1} \cdots s_{i_N}$ be a reduced decomposition of the longest element $\omega_0$ of $W$. Then, $\beta_k:=s_{i_1}s_{i_2} \cdots s_{i_{k-1}}\alpha_{i_k}$, $k=1,...,N$ gives a total ordering on the set of positive roots $\Delta^+$ of $\mfg$ and for each $\beta_i$, $i=1,...,N$, we can associate the root vectors $X_{\pm \beta_i} \in \Uq{\mfg}$ as seen in \cite[Subsection 9.1]{CP}. We have the following PBW theorem \cite[Proposition 9.2.2]{CP}:
 
 \begin{theorem}\label{PBW}
 The multiplication operation in $\Uq{\mfg}$ defines a vector space isomorphism
 \[ U_q(\eta^-) \otimes U_q(\mfh) \otimes U_q(\eta^+) \cong \Uq{\mfg} \]
 
where $U_q(\eta^{\pm})$ is the subalgebra generated by the $X_{\pm \alpha_i}$ and $U_q(\mfh)$ the subalgebra generated by the $K_\gamma$. The set $\{X_{\pm \beta_1}^{k_1}X_{\pm \beta_2}^{k_2} \cdots X_{\pm \beta_N}^{k_N}| k_i \in \mathbb{Z}_{\geq0}\}$ is a basis of $U_q(\eta^{\pm})$.
 \end{theorem}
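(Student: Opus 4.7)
The plan is to follow the standard Lusztig/De Concini--Kac approach via braid-group generated root vectors, as laid out in \cite{CP}. First, using the restriction of the braid group action of Proposition \ref{braid} to $U_q(\mfg)$, I would define positive root vectors
\[ X_{\beta_k} := T_{i_1} T_{i_2} \cdots T_{i_{k-1}} (X_{\alpha_{i_k}}), \qquad k = 1, \ldots, N, \]
and negative root vectors $X_{-\beta_k}$ analogously. A preliminary check verifies $X_{\pm\beta_k} \in U_q(\eta^{\pm})$ (so these are well-defined elements of the nilpotent subalgebras) and that, although the $T_i$ depend on the chosen reduced expression for $\omega_0$, different reduced expressions only permute the ordering of the resulting root vectors up to scalar.

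The second step is to establish the Levendorskii--Soibelman straightening relations: for $i < j$,
\[ X_{\beta_i} X_{\beta_j} - q^{\langle \beta_i, \beta_j \rangle} X_{\beta_j} X_{\beta_i} = \sum_{\mathbf{k}} c_{\mathbf{k}} \, X_{\beta_{i+1}}^{k_{i+1}} \cdots X_{\beta_{j-1}}^{k_{j-1}}, \]
with $c_{\mathbf{k}} \in \mathbb{C}$, and the analogous relations on the negative side. These reduce via the braid group action to the rank-two subalgebras $U_q(\mfl)$ for $\mfl$ of types $A_1 \times A_1$, $A_2$, $B_2$, $G_2$, where they can be verified by a direct (and well-documented) computation. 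Repeated application of these relations lets one rewrite any monomial in the $X_{\pm\beta_k}$ as a $\mathbb{C}$-linear combination of ordered monomials, yielding the spanning statement for $U_q(\eta^{\pm})$.

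The triangular decomposition $U_q(\mfg) = U_q(\eta^-)\,U_q(\mfh)\,U_q(\eta^+)$ as a spanning set is immediate from the defining relations \eqref{eqKX}--\eqref{eqX}, which allow one to push all $K_\gamma$ past the $X_{\pm i}$ and to bring $X_{-i}$ factors to the left of $X_i$ factors, modulo elements of $U_q(\mfh)$. For linear independence, I would invoke a semiclassical limit argument: viewing $U_q(\mfg)$ as the generic fibre of a suitable $\mathbb{C}[q,q^{-1}]$-form that specializes at $q = 1$ to the classical enveloping algebra $U(\mfg)$, the classical PBW theorem for $U(\mfg)$ lifts to independence of the ordered monomials here, both inside $U_q(\eta^{\pm})$ and in the triangular decomposition.

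The main obstacle is the linear independence statement: the straightening and spanning steps are computational but essentially mechanical, whereas independence genuinely requires a comparison with the classical enveloping algebra (or, as an alternative, the explicit construction of a module such as a generic Verma on which the proposed PBW basis acts independently). Once independence is in hand, both the vector space isomorphism in the theorem and the basis statement for $U_q(\eta^{\pm})$ follow simultaneously.
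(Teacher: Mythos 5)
The paper does not give a proof of this theorem; it is quoted as a known result with a citation to \cite[Subsections 8.1 and 9.1]{CP}, where the root vectors and PBW basis are constructed exactly as you describe. Your outline matches the standard argument in that reference: braid-group root vectors from a reduced decomposition of the longest Weyl element, Levendorskii--Soibelman straightening relations for spanning, triangular decomposition from the defining relations, and a linear-independence argument at the end. One adjustment worth noting: in the paper $q$ is a fixed root of unity, so the $q \to 1$ semiclassical degeneration cannot be applied directly to the complex algebra $U_q(\mfg)$. The specialization argument has to be run on the $\mathbb{C}[q,q^{-1}]$-form with $q$ an indeterminate --- one first establishes that the ordered monomials form a basis of the integral form (where your $q=1$ comparison is valid), and then invokes freeness of that form to descend to any nonzero complex specialization of $q$, including roots of unity. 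The alternative you mention (independence of the action on a generic Verma module) avoids this detour entirely and would be the more direct route in the paper's setting.
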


It can be shown by induction on $s$ \cite{J} that 
\begin{equation}\label{eqJ} [X_i,X_{-i}^s]=[s]_iX_{-i}^{s-1}[K_i;d_i(1-s)], \end{equation}
where $[K_i;n]=(K_iq^n-K_i^{-1}q^{-n})/(q_i-q_i^{-1})$. Let $d_{\alpha}=\frac{1}{2}\langle \alpha, \alpha \rangle$, and define 
\begin{equation}\label{eqr}r_{\alpha}:=r/\mathrm{gcd}(d_{\alpha},r).\end{equation}
where we recall the definition of $r$ in Equation \eqref{rl}. Then $[r_{\alpha_i}]_i=[r]=0$, so it follows from equations \eqref{eqX} and \eqref{eqJ} that $[X_j,X_{-i}^{r_{\alpha_i}}]=0$ for all $i,j$. Applying the braid group action then gives $[X_{\beta},X_{-\alpha}^{r_{\alpha}}]=0$ for all $\alpha,\beta \in \Delta^{+}$, where we have used the fact that $d_{\alpha}=d_i$ when $\alpha$ lies in the Weyl orbit of $\alpha_i$. It follows that given any maximal vector $v$ (i.e. $X_{i}v=0$ for all $i$) in some $U_q^H(\mfg)$-module $V$, $X_{-\alpha}^{r_{\alpha}}v$ is also maximal. In particular, all Verma modules of $U_q^H(\mfg)$ will be reducible and therefore there will be no projective irreducible modules (as all irreducibles are quotients of Verma modules). Obtaining a category of representations which is generically semisimple is the motivation for our choice of definition of the unrolled restricted quantum group at arbitrary roots and to do this, we quotient out $\{X_{\pm\alpha}^{r_{\alpha}}\}_{\alpha \in \Delta^+}$. It follows from equation \eqref{eqKX} that $X_i \otimes K_i X_i=q^{2d_i}X_i \otimes X_iK_i$, so equation \eqref{coXi} and the $q$-binomial formula tell us that
\[ \Delta(X_i^{r_{\alpha_i}})= \sum\limits_{k=0}^{r_{\alpha_i}} \binom{r_{\alpha_i}}{k}_{q^{2d_i}} X_i^n\otimes K_i^n X_i^{r_{\alpha_i}-n} = 1 \otimes X_i^{r_{\alpha_i}} + X_i^{r_{\alpha_i}} \otimes K_i^{r_{\alpha_i}}\]
since $\binom{r_{\alpha_i}}{k}=1$ if $k=0,r_{\alpha_i}$ and zero otherwise. We can perform the same computation for $X_{-i}^{r_{\alpha_i}}$, so we see that the two-sided ideal generated by $\{X_{\pm \alpha_i}^{r_{\alpha_i}}\}_{\alpha_i \in \Delta}$ is a Hopf ideal (it follows immediately from equations \eqref{eqKX} and \eqref{coXi} that this ideal is invariant under the antipode $S$).
\begin{definition}\label{restricteddef}
The unrolled restricted quantum group of $\mfg$, $\UHbar{\mfg}$, is defined to be the unrolled quantum group $\UH{\mfg}$ of Definition \ref{uqh} quotiented by the Hopf ideal generated by $\{X_{\pm \alpha_i}^{r_{\alpha_i}} \}_{\alpha_i \in \Delta}$.
\end{definition}
This definition is very closely related to that of the small quantum group in \cite{L}. It follows trivially from the braid relations that $X_{\pm \alpha}^{r_{\alpha}}=0$ in $\overline{U}_q^H(\mfg)$ for every root vector $\alpha \in \Delta^+$, and it follows from the PBW theorem that $\{X_{\pm \beta_1}^{k_1}X_{\pm \beta_2}^{k_2} \cdots X_{\pm \beta_N}^{k_N} \;| \; 0 \leq k_i < r_{\beta_i}\}$ is a basis of $\overline{U}_q^H(\eta^{\pm})$. There exists an automorphism of $U_q(\mfg)$ which swaps $X_i$ with $X_{-i}$ and inverts $K_i$ (see \cite[Lemma 4.6]{J}). It is easily checked that this automorphism can be extended to $\UH{\mfg}$ by defining $\omega(H_i)=-H_i$, so there exists an automorphism $\omega: \UH{\mfg} \to \UH{\mfg}$ defined by
\begin{equation}\label{eqw} \omega(X_{\pm i})=X_{\mp i}, \qquad \omega(K_{\gamma})=K_{-\gamma},\qquad \omega (H_i)=-H_i.
\end{equation}
This automorphism will appear in Section \ref{Repsection} in the definition of a Hermitian form on Verma modules introduced in \cite{DCK}, and the definition of a contravariant functor analogous to the duality functor for Lie algebras.

\section{\textbf{Representation Theory of $\UHbar{\mfg}$}}\label{Repsection}
 
For each module $V$ of $\UHbar{\mfg}$ and $\lambda \in \cartan^*$, define the set $V(\lambda):=\{ v \in V \; | \; H_iv=\lambda(H_i)v\}$. If $V(\lambda)\not = 0$, then we call $\lambda$ a weight of $V$, $V(\lambda)$ its weight space, and any $v \in V(\lambda)$ a weight vector of weight $\lambda$. 
\begin{definition}\label{weightmod}A $\overline{U}_q^H(\mfg)$-module $V$ is called a weight module if $V$ splits as a direct sum of weight spaces and for each $\gamma=\sum\limits_{i=1}^n k_i \alpha_i \in L$, $K_{\gamma}=\prod\limits_{i=1}^nq_i^{k_iH_i}$ as operators on $V$. We define $\mcC$ to be the category of finite dimensional weight modules for $\UHbar{\mfg}$. \end{definition}
Given any $V \in \mcC$, we denote by $\Gamma(V)$ the set of weights of $V$. That is,
\begin{align}\label{weightset}
\Gamma(V)=\{\lambda \in \mfh^*\; | \; V(\lambda) \not = 0\}.
\end{align} 
We define the character of a module $V \in \mcC$ using the dimensions of the $H$-eigenspaces as
\begin{equation}\label{character}
\mathrm{ch}[V]=\sum\limits_{\lambda \in \mfh^*} \mathrm{dim}V(\lambda)z^{\lambda}.
\end{equation}
It is easy to show that for any module $V$ and $\lambda \in \cartan^*$,
\begin{align}\label{rootlattice} X_{\pm j}V(\lambda) \subset V(\lambda \pm \alpha_j). 
\end{align}
We define the usual partial order ``$\geq$" on $\cartan^*$ by $\lambda_1 \geq \lambda_2$ iff $\lambda_1=\lambda_2 +\sum\limits_{i=1}^n k_i \alpha_i$ for some $k_i \in \mathbb{Z}_{\geq 0}$. A weight $\lambda$ of $V$ is said to be highest weight if it is maximal with respect to the partial order among the weights of $V$. A vector $v \in V$ is called maximal if $X_iv=0$ for each $i$, and a module generated by a maximal vector will be called highest weight. 

Given a weight $\lambda \in \mfh^*$, denote by $I^{\lambda}$ the ideal of $\UHbar{\mfg}$ generated by the relations $H_i\cdot 1=\lambda(H_i)$, $K_{\gamma} \cdot 1=\prod\limits_{i=1}^nq_i^{k_i \lambda(H_i)}$ for $\gamma=\sum\limits_{i=1}^n k_i\alpha_i \in L$ and $X_i \cdot 1=0$ for each $i$.

\begin{definition} \label{Verma}
Define $ \Verma{\lambda}:=\UHbar{\mfg} / I^{\lambda}$. $\Verma{\lambda}$ is generated as a module by the coset $v_{\lambda}:=1+I^{\lambda}$ with relations
\[ X_iv_{\lambda}=0, \qquad H_iv_{\lambda}=\lambda(H_i)v_{\lambda},  \qquad K_{\gamma}v_{\lambda}=\prod\limits_{i=1}^nq_i^{k_i\lambda(H_i)}v_{\lambda}, \]
where $\gamma=\sum\limits k_i \alpha_i \in L$. It follows from Theorem \ref{PBW} that $M^{\lambda}$ has basis $\{X_{-\beta_1}^{k_1}X_{-\beta_2}^{k_2} \cdots X_{-\beta_N}^{k_N}v_{\lambda} \; | \; 0\leq k_i < r_{\beta_i} \}$.
\end{definition}

Clearly, $\Verma{\lambda} \in \mcC$ and is universal with respect to highest weight modules in $\mcC$, that is, for any module $M \in \mcC$ generated by a highest weight vector of weight $\lambda$, there exists a surjection $M^{\lambda} \twoheadrightarrow M$. Each proper submodule $S$ of $\Verma{\lambda}$ is a direct sum of its weight spaces and has $S(\lambda)=\emptyset$, so the union of all proper submodules is a maximal proper submodule. Hence, each reducible $\Verma{\lambda}$ has a unique maximal proper submodule $S^{\lambda}$ and unique irreducible quotient $L^{\lambda}$ of highest weight $\lambda$. We therefore refer to $\Verma{\lambda}$ as the Verma (or universal highest weight) module of highest weight $\lambda$ and we have the following proposition by standard arguments:
\begin{proposition}
$V \in \mcC$ is irreducible iff $V \cong L^{\lambda}$ for some $\lambda \in \mfh^*$.
\end{proposition}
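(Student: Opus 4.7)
The plan is to prove both directions separately, with the easier direction being that every $L^{\lambda}$ is an irreducible object of $\mcC$ and the substantive direction being that every irreducible in $\mcC$ is isomorphic to some $L^{\lambda}$.

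For the ``if'' direction, I first need to verify that $L^{\lambda} \in \mcC$. By Definition \ref{Verma}, the Verma module $M^{\lambda}$ has the explicit finite basis $\{X_{-\beta_1}^{k_1}\cdots X_{-\beta_N}^{k_N} v_{\lambda} \mid 0 \leq k_i < r_{\beta_i}\}$, which consists of weight vectors of weight $\lambda - \sum k_i \beta_i$; a short check using equation \eqref{eqKX} confirms that the $K_{\gamma}$-action on these vectors is of the form demanded by Definition \ref{weightmod}. Hence $M^{\lambda}$, and therefore its irreducible quotient $L^{\lambda}$, lies in $\mcC$. Irreducibility of $L^{\lambda}$ holds by construction as the quotient of $M^{\lambda}$ by its unique maximal proper submodule.

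For the ``only if'' direction, let $V \in \mcC$ be irreducible. Since $V$ is finite dimensional, the set $\Gamma(V)$ of weights is finite, so I may pick $\lambda \in \Gamma(V)$ that is maximal with respect to the partial order $\geq$ on $\mfh^*$, and choose $0 \neq v \in V(\lambda)$. By \eqref{rootlattice}, $X_i v \in V(\lambda + \alpha_i)$, and maximality of $\lambda$ forces $V(\lambda+\alpha_i) = 0$, so $X_i v = 0$ for every $i$; that is, $v$ is a maximal vector of weight $\lambda$. The submodule $\UHbar{\mfg}\cdot v \subseteq V$ is then a highest weight module of weight $\lambda$, and by irreducibility of $V$ it must equal $V$. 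By the universal property of $M^{\lambda}$ recalled just before the proposition, there is a surjection $M^{\lambda} \twoheadrightarrow V$. Since $V$ is irreducible, its kernel is a maximal proper submodule of $M^{\lambda}$, hence coincides with $S^{\lambda}$, giving $V \cong M^{\lambda}/S^{\lambda} = L^{\lambda}$.

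There is no real obstacle here: the argument is the standard highest-weight classification, and the only points that require a moment of care are (i) that finite dimensionality of $V$ guarantees the existence of a maximal weight, and (ii) that the quotient $M^{\lambda}/S^{\lambda}$ is indeed the unique irreducible quotient, which was established in the paragraph preceding the proposition.
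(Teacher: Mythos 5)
Your proof is correct and is precisely the ``standard argument'' the paper invokes without spelling out: pick a maximal weight of the finite-dimensional simple $V$ to produce a maximal vector, use irreducibility to see $V$ is a highest weight module, apply the universal property of $M^{\lambda}$, and identify the kernel of the resulting surjection with the unique maximal submodule $S^{\lambda}$ established in the preceding paragraph. Both directions are handled cleanly; there is no gap.
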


It is clear that every module in $\mcC$ is a module over $U_q(\mfg)$ and since the $H_i$ act semi-simply, $\Verma{\lambda}$ is irreducible iff it is irreducible as a $U_q(\mfg)$-module. Kac and De Concini defined a Hermitian form $H$ on $M^{\lambda}$ \cite[Equation 1.9.2]{DCK} by
\[H(v_{\lambda},v_{\lambda})=1 \quad \mathrm{and} \quad H(Xu,v)=H(u, \omega(X)v)\]
for all $X \in \overline{U}_q^H(\mfg)$ and $u,v \in M^{\lambda}$ where $\omega$ is the automorphism defined in equation \eqref{eqw}. Let $\eta \in \Delta^+$ and denote by $\mathrm{det}_{\eta}(\lambda)$ the determinant of the Gram matrix of $H$ restricted to $M^{\lambda}(\lambda-\eta)$ in the basis consisting of elements $F_{\beta_1}^{k_1} \cdots F_{\beta_N}^{k_N} v_{\lambda}$ with $\vec{k}=(k_1,...,k_N) \in \mathrm{Par}(\eta):=\{ \vec{k} \in \mbbZ^N \, | \, \sum k_i \beta_i=\eta, \, 0 \leq k_i < r_{\beta_i} \}$. The determinant of $H$ vanishes precisely on the maximal submodule of $M^{\lambda}$ and is given on $M^{\lambda}(\lambda-\eta)$ by \cite[Equation 1.9.3]{DCK}
\begin{align*} \mathrm{det}_{\eta}(\lambda)&=\prod\limits_{\alpha \in \Delta^+}\prod\limits_{m =0}^{r_{\alpha}-1} \left(\frac{ \{md_{\alpha}\}}{\{d_{\alpha}\}^2} \right)^{|\mathrm{Par}(\eta - m\alpha)|} (\lambda(K_{\alpha})q^{\langle \rho, \alpha \rangle-\frac{m}{2}\langle \alpha , \alpha \rangle}-\lambda(K_{\alpha}^{-1})q^{-\langle \rho, \alpha \rangle +\frac{m}{2} \langle \alpha , \alpha \rangle})^{| \mathrm{Par}(\eta-m\alpha)|}
\end{align*}
where $\rho$ is the Weyl vector and by abuse of notation we denote by $\lambda(K_{\alpha})$ the scalar by which $K_{\alpha}$ acts on the highest weight vector of $M^{\lambda}$. It then follows as in \cite[Theorem 3.2]{DCK} that we have the following:
\begin{proposition}\label{irreducible}
$\Verma{\lambda}$ is irreducible iff $q^{2\langle \lambda +\rho,\alpha \rangle-k\langle \alpha, \alpha \rangle} \not = 1$ for all $\alpha \in \Delta^+$ and $k=1,...,r_{\alpha}-1$.
\end{proposition}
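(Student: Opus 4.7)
The plan is to deduce Proposition \ref{irreducible} directly from the determinant formula stated just above it, essentially mirroring the De Concini--Kac argument but bookkeeping the restricted-quantum-group conventions. The starting point is the fact, already recorded in the excerpt, that the Hermitian form $H$ has kernel equal to the maximal proper submodule of $M^\lambda$. Hence $M^\lambda$ is irreducible if and only if $H$ is non-degenerate on each weight space, i.e.\ if and only if $\det_\eta(\lambda)\neq 0$ for every $\eta\in L_R$ with $\mathrm{Par}(\eta)\neq\emptyset$. So the task is to read off exactly when the right-hand side of the displayed formula has a zero.

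Next, I would analyze the two kinds of factors separately. First, the scalar prefactor $\{md_\alpha\}/\{d_\alpha\}^2$ vanishes only when $q^{2md_\alpha}=1$, i.e.\ when $\ell\mid 2md_\alpha$. Using $r=2\ell/(3+(-1)^\ell)$ and the definition $r_\alpha=r/\gcd(d_\alpha,r)$ from \eqref{eqr}, this is equivalent to $r_\alpha\mid m$; for $1\le m\le r_\alpha-1$ this never occurs, so the prefactor contributes no zeros in the relevant range (and the $m=0$ term is absent, as is standard). Second, on a weight module $\lambda(K_\alpha)=q^{\langle\lambda,\alpha\rangle}$, so the linear factor rewrites as
\[
q^{\langle\lambda+\rho,\alpha\rangle-\frac{m}{2}\langle\alpha,\alpha\rangle}\Bigl(1-q^{-2\langle\lambda+\rho,\alpha\rangle+m\langle\alpha,\alpha\rangle}\Bigr),
\]
which vanishes precisely when $q^{2\langle\lambda+\rho,\alpha\rangle-m\langle\alpha,\alpha\rangle}=1$.

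The last point is to show that each factor indexed by $(\alpha,m)$ with $\alpha\in\Delta^+$ and $1\le m\le r_\alpha-1$ actually appears with positive multiplicity in some $\det_\eta(\lambda)$, so no vanishing is "hidden" by a zero exponent. Taking $\eta=m\alpha$ and the partition assigning $m$ to the $\alpha$-slot and $0$ elsewhere gives $|\mathrm{Par}(\eta-m\alpha)|\ge 1$ (the zero partition is admissible), and $m<r_\alpha$ ensures this lies inside the PBW range for $\overline U_q^H(\eta^-)$. Conversely, every zero of every $\det_\eta(\lambda)$ in $\lambda$ is produced by such a linear factor. Combining the two analyses yields the claimed equivalence.

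The main obstacle is not any single step but the careful translation: verifying that in passing from the generic $U_q(\mfg)$ setting of De Concini--Kac to the \emph{restricted} unrolled algebra the parameter $m$ is correctly restricted to $\{1,\dots,r_\alpha-1\}$, that the prefactor $\{md_\alpha\}$ genuinely does not vanish there, and that the basis used to compute the Gram matrix is indeed the PBW basis of Definition \ref{Verma}. Once these bookkeeping points are established the irreducibility criterion follows by pure inspection of the determinant formula.
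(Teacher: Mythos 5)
Your argument is correct and is essentially the paper's proof made explicit: the paper simply records the De Concini--Kac determinant formula and then cites \cite[Theorem~3.2]{DCK}, and your zero analysis (nonvanishing of $\{md_{\alpha}\}$ for $1\le m\le r_{\alpha}-1$, the identification $\lambda(K_{\alpha})=q^{\langle\lambda,\alpha\rangle}$ on weight modules, and the positivity of $|\mathrm{Par}(\eta-m\alpha)|$ for $\eta=m\alpha$) is exactly what that citation delegates. You are also right to treat the $m=0$ index in the displayed product as spurious; the De Concini--Kac product runs over $m\ge 1$.
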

For each $\alpha \in \Delta^+$ we associate to $\lambda$ the scalars $\lambda_{\alpha}\in \mbbC$ defined by 
\[ \lambda_{\alpha}:=\langle \lambda+\rho, \alpha \rangle.\] 
Notice that $\Verma{\lambda}$ is reducible iff for some $\alpha \in \Delta^+$ we have
\begin{equation} \label{kn}
2(\lambda_{\alpha}-k_{\alpha}^{\lambda} d_{\alpha})=n_{\alpha}^{\lambda} \ell
\end{equation}
for some $k_{\alpha}^{\lambda} \in \{1,...,r_{\alpha}-1\}$, $n_{\alpha}^{\lambda} \in \mbbZ$, where $d_{\alpha}:=\frac{1}{2}\langle \alpha, \alpha \rangle$. This motivates the following definition:
\begin{definition}\label{typicaldef}
We call the scalar $\lambda_{\alpha}$ typical if $2(\lambda_{\alpha} - kd_{\alpha}) \not = 0 \; \mathrm{mod \, \ell}$ for all $k=1,...,r_{\alpha}-1$ and atypical otherwise. We call $\lambda \in \mfh^*$ typical if $\lambda_{\alpha}$ is typical for all $\alpha \in \Delta^+$ and atypical otherwise.
\end{definition}
Clearly, $\Verma{\lambda}$ is irreducible iff $\lambda$ is typical. We can rewrite the atypicality condition into a more convenient form, which will be useful in the next subsection:
\begin{proposition}\label{typ}
$\lambda_{\alpha}$ is typical iff $\lambda_{\alpha} \in \ddot{\mathbb{C}}_{\alpha}$ where
\[ \ddot{\mathbb{C}}_{\alpha}:=\begin{cases}
(\mbbC \setminus g_{\alpha}\mathbb{Z}) \cup r\mathbb{Z} & \text{ if $\ell$ is even}\\
(\mbbC \setminus \frac{g_{\alpha}}{2}\mathbb{Z}) \cup \frac{r}{2}\mathbb{Z} & \text{ if $\ell$ is odd}
\end{cases}
 \]
where $g_{\alpha}=gcd(d_{\alpha},r)$.
\end{proposition}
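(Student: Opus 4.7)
The plan is to translate the atypicality condition of Definition~\ref{typicaldef} into a statement about residue classes, split by the parity of $\ell$, and reduce to an elementary fact about cyclic quotients of $\mathbb{Z}$.

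First I would rewrite atypicality: $\lambda_\alpha$ is atypical iff there exists $k \in \{1,\dots,r_\alpha-1\}$ with $2(\lambda_\alpha - kd_\alpha) \in \ell\mathbb{Z}$, i.e.\ $\lambda_\alpha \in kd_\alpha + (\ell/2)\mathbb{Z}$. Writing the atypical locus as
\[ \mathcal{A}_\alpha := \bigcup_{k=1}^{r_\alpha - 1}\bigl(kd_\alpha + (\ell/2)\mathbb{Z}\bigr), \]
the goal becomes: show $\mathcal{A}_\alpha = g_\alpha\mathbb{Z} \setminus r\mathbb{Z}$ when $\ell$ is even, and $\mathcal{A}_\alpha = (g_\alpha/2)\mathbb{Z} \setminus (r/2)\mathbb{Z}$ when $\ell$ is odd. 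Taking complements in $\mathbb{C}$ then gives the two cases of $\ddot{\mathbb{C}}_\alpha$.

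In the even case $\ell/2 = r$, so $\mathcal{A}_\alpha = \bigcup_k(kd_\alpha + r\mathbb{Z})$. Since $g_\alpha = \gcd(d_\alpha,r)$, the homomorphism $\mathbb{Z} \to \mathbb{Z}/r\mathbb{Z}$ sending $k \mapsto kd_\alpha$ has image $g_\alpha\mathbb{Z}/r\mathbb{Z}$ and kernel $r_\alpha\mathbb{Z}$, hence induces an isomorphism $\mathbb{Z}/r_\alpha\mathbb{Z} \xrightarrow{\sim} g_\alpha\mathbb{Z}/r\mathbb{Z}$; letting $k$ run over $\{1,\dots,r_\alpha-1\}$ therefore hits each nonzero class exactly once, and reassembling the corresponding cosets gives $g_\alpha\mathbb{Z} \setminus r\mathbb{Z}$. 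In the odd case $\ell = r$, the condition becomes $2\lambda_\alpha \in 2kd_\alpha + r\mathbb{Z}$; since $r$ is odd, $\gcd(2d_\alpha, r) = g_\alpha$, and the identical cyclic-group argument applied to $k \mapsto 2kd_\alpha \pmod r$ yields $2\mathcal{A}_\alpha = g_\alpha\mathbb{Z} \setminus r\mathbb{Z}$, i.e.\ $\mathcal{A}_\alpha = (g_\alpha/2)\mathbb{Z} \setminus (r/2)\mathbb{Z}$.

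The only subtle point is the restriction $k \in \{1,\dots,r_\alpha - 1\}$: I must verify that these $k$ precisely parameterize the nonzero classes of $g_\alpha\mathbb{Z}/r\mathbb{Z}$, so that the coset $r\mathbb{Z}$ (coming from $k=0$) is correctly excluded from the atypical locus, and no atypical class is missed because of the upper cutoff at $r_\alpha - 1$. Both follow from the isomorphism above. Everything else is bookkeeping around the factor of $2$, handled uniformly in each parity case by multiplying or dividing by $2$ as appropriate.
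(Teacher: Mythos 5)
Your proof is correct, and the route is genuinely cleaner than what the paper does. The paper first restricts to $\gcd(d_i,r)=1$ (so $r_\alpha=r$, $g_\alpha=1$), proves each inclusion of the odd case by hand — one direction by a contradiction argument using $\gcd(r,2d_\alpha)=1$, the other via a B\'ezout identity $2d_\alpha a+br=1$ — mentions the even case is similar, and only then treats $\gcd(d_\alpha,r)\neq 1$ as a separate case whose result is stated rather than re-derived (with $d_\alpha$ in place of $g_\alpha$, harmless here since $d_\alpha\in\{1,2,3\}$ forces $g_\alpha\in\{1,d_\alpha\}$). You instead encode the whole computation in a single structural fact: the homomorphism $k\mapsto kd_\alpha$ (resp.\ $k\mapsto 2kd_\alpha$ in the odd case, where $\gcd(2d_\alpha,r)=g_\alpha$) from $\mathbb{Z}$ to $\mathbb{Z}/r\mathbb{Z}$ has image $g_\alpha\mathbb{Z}/r\mathbb{Z}$ and kernel $r_\alpha\mathbb{Z}$, so $k\in\{1,\dots,r_\alpha-1\}$ bijects onto the nonzero classes. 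This buys you a uniform treatment of arbitrary $\gcd(d_\alpha,r)$ with no case split, no B\'ezout, and an automatic check that the $k=0$ coset (and nothing else) is excluded, which is exactly the subtle point the paper's contradiction argument is handling in the $\gcd=1$ setting. The paper's version is more explicit and elementary in flavor; yours is shorter and makes it transparent why the answer is $g_\alpha\mathbb{Z}\setminus r\mathbb{Z}$ (and its halved analogue) rather than something $d_\alpha$-dependent.
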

\begin{proof} By Proposition \ref{irreducible} and the following comments, $\lambda_{\alpha}$ is atypical iff $2(\lambda_{\alpha}-kd_{\alpha}) =0 \; \mathrm{mod \, \ell}$ for some $k=1,...,r_{\alpha}-1$. That is, iff
\[ \lambda_{\alpha} \in \bigcup\limits_{n \in \mathbb{Z}} \bigcup\limits_{k =1}^{r_{\alpha}-1}\frac{ n\ell+2kd_{\alpha}}{2} \]

Assume now that $gcd(d_i,r)=1$ for all $i$. Note that each non-simple root $\alpha$ lies in the Weyl orbit of some simple root $\alpha_i$ and that $d_{\alpha}=d_{\alpha_i}$, so $gcd(d_{\alpha},r)=1$ for all $\alpha \in \Delta^+$ (so $r_{\alpha}=r$ for all $\alpha$). Let $r=\ell$ be odd, then we have $\lambda_{\alpha} \in \bigcup\limits_{n \in \mathbb{Z}} \bigcup\limits_{k =1}^{r-1}\frac{nr+2kd_{\alpha}}{2}$ which is clearly a subset of $\frac{1}{2}\mathbb{Z}$. Suppose $nr+2kd_{\alpha}=rm$ for some $m \in \mathbb{Z}$. Then $r(m-n)=2kd_{\alpha}$ and we have $gcd(r,2d_{\alpha})=1$ so we must have $2d_{\alpha} | m-n$. However, we have $k \in \{1,...,r-1\}$ so $|r(m-n)|>|2kd_{\alpha}|$, a contradiction. Hence, $\lambda_{\alpha} \in \frac{1}{2}\mathbb{Z} \setminus \frac{r}{2}\mathbb{Z}$. Let $x \in \mathbb{Z} \setminus r\mathbb{Z}$. Since $gcd(r,2d_{\alpha})=1$, there exist $a,b \in \mathbb{Z}$ such that $2d_{\alpha}a+br=1$. Since $x \not \in r\mathbb{Z}$, we have $ax \not \in r\mathbb{Z}$, otherwise $x=2d_{\alpha}ax+brx \in  r\mathbb{Z}$. Therefore, there exist $m, k \in \mathbb{Z}$ with $k=1,...,r-1$ such that $ax=mr+k$. Then, $2d_{\alpha}ax=2d_{\alpha}mr+2d_{\alpha}k$, so $x=2d_{\alpha}k \; \mathrm{mod \, r}$ (since $2d_{\alpha}a=1 \; \mathrm{mod \, r}$) and so $x \in \bigcup\limits_{n \in \mathbb{Z}} \bigcup\limits_{k=1}^{r-1} rn+2d_{\alpha}k$. Hence, we have shown
\[ \bigcup\limits_{n \in \mathbb{Z}} \bigcup\limits_{k =1}^{r-1}\frac{nr+2d_{\alpha}k}{2}= \frac{1}{2}\mathbb{Z} \setminus \frac{r}{2}\mathbb{Z}\]

A similar argument shows that $\bigcup\limits_{n \in \mathbb{Z}} \bigcup\limits_{k =1}^{r-1}\frac{ n\ell+2kd_{\alpha}}{2}=\mathbb{Z} \setminus r\mathbb{Z}$ when $\ell$ is even. Suppose now that $gcd(d_{\alpha},r) \not = 0$. Then we have 
\[ \bigcup\limits_{n \in \mathbb{Z}} \bigcup\limits_{k =1}^{r_{\alpha}-1}\frac{(nr_{\alpha}+2k)d_{\alpha}}{2}=\begin{cases} d_{\alpha} \mathbb{Z} \setminus r \mathbb{Z} & \ell \mathrm{ \; even}\\
 \frac{d_{\alpha}}{2}\mathbb{Z} \setminus \frac{r}{2} \mathbb{Z} & \ell \mathrm{\; odd}
\end{cases} \]
\end{proof}

\begin{remark}
The invertible objects in $\mcC$ are clearly the $1$-dimensional $L^{\lambda}$. Note that we have
\[ X_jX_{-i}v_{\lambda}=\delta_{i,j}[\lambda(H_i)]_iv_{\lambda}\]
so $L^{\lambda}$ is $1$-dimensional iff $2\lambda(H_i)d_i = 0 \mathrm{ \; mod\;} \ell$ i.e. $\lambda(H_i) \in \frac{\ell}{2d_i} \mathbb{Z}$ for all $i$. These objects played a crucial role in \cite{CGR} for the construction of certain quasi-Hopf algebras $\overline{U}_q^{\Phi}(\mathfrak{sl}_2)$ whose representation theory related to the triplet VOA. We expect this to remain true in the higher rank case, and will be investigated in future work. We also expect the higher rank analogues of $\overline{U}_q^{\Phi}(\mathfrak{sl}_2)$ to be closely related to those quantum groups which appear in \cite{N,GLO}.
\end{remark}

\subsection{Categorical Structure} \label{sectioncategoricaldata}
We first remark that it is easily seen (as in \cite[Subsection 5.6]{GP1}), that the square of the antipode acts as conjugation by $K_{2\rho}^{1-r}$, i.e.
\[ S^2(x)=K_{2\rho}^{1-r}xK_{2\rho}^{r-1} \]
for $x \in \UHbar{\mfg}$ where $\rho:= \frac{1}{2}\sum\limits_{\alpha \in \Delta^+} \alpha$ is the Weyl vector. A Hopf algebra in which the square of the antipode acts as conjugation by a group-like element is pivotal (see \cite[Proposition 2.9]{B}), so $\mathcal{C}$ is pivotal. It is clear from equation \eqref{rootlattice} that given any $v \in V(\lambda)$, every weight for the submodule $\langle v\rangle \subset V$ generated by $v$ has the form $\lambda+\sum\limits_{i=1}^n k_i\alpha_i$ for some $k_i \in \mathbb{Z}$. That is, every weight vector in $\langle v \rangle$ has weight differing from $\lambda$ by an element of the root lattice $L_R$ of $\mfg$. We can quotient $\mfh^*$ by $L_R$ to obtain the group $\mft:=\mfh^*/L_R$. We then define $\mcC_{\overline{\lambda}}$ for $\overline{\lambda} \in \mft$ to be the full subcategory of $\mcC$ consisting of modules whose weights differ from $\lambda$ by an element of $L_R$. Clearly, 
\begin{align}\label{grading} 
\mcC= \bigoplus\limits_{\overline{\lambda} \in \mft} \mcC_{\overline{\lambda}}\end{align}
and it is easy to see that this gives $\mcC$ a $\mft$-grading as in Section \ref{prelim}. Note that all $\mathcal{C}_{\overline{\lambda}}$ are non-zero since any representative $\lambda \in \mfh^*$ of $\overline{\lambda}$ has a simple highest weight module $L^{\lambda} \in \mathcal{C}_{\overline{\lambda}}$. In fact, we have the following:

\begin{proposition}\label{gens}
$\mcC$ is generically $\mft$-semisimple.
\end{proposition}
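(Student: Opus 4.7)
The plan is to take the singular locus to be the image of the atypical weights in $\mfh^*$ under the projection $\pi : \mfh^* \to \mft$:
\[
\mcX := \pi\bigl(\{\lambda \in \mfh^* : \lambda \text{ is atypical}\}\bigr) \subset \mft.
\]
Symmetry of $\mcX$ is immediate from the pivotal structure on $\mcC$ established just before the statement: the duality $V \mapsto V^*$ sends $\mcC_{\bar\lambda}$ to $\mcC_{-\bar\lambda}$, so the locus of cosets $\bar\lambda$ for which $\mcC_{\bar\lambda}$ fails to be semisimple is closed under $\bar\lambda \mapsto -\bar\lambda$. Smallness follows from Proposition \ref{typ}: the atypical locus in $\mfh^*$ is a countable union of complex affine hyperplanes $\{\langle\lambda+\rho,\alpha\rangle = c\}$ indexed by $\alpha \in \Delta^+$ and $c \in \mbbC \setminus \Cdots_\alpha$, each of real codimension $2$. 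The real $2n$-dimensional group $\mft = \mfh^*/L_R$ therefore cannot be covered by any finite union of translates $\bigcup_{i=1}^k g_i \mcX$, since the result would still be a countable union of subsets of real codimension $\geq 2$ (Baire category).

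For fiberwise semisimplicity, I would fix $\bar\lambda \notin \mcX$ and $V \in \mcC_{\bar\lambda}$. Every weight of $V$ lies in $\lambda + L_R$ and is therefore typical by construction, so every Verma $M^\mu$ with $\mu$ in this coset is irreducible and equal to $L^\mu$ (Proposition \ref{irreducible}). Since $V$ is finite-dimensional it admits a highest weight $\mu_0$; the submodule generated by any highest-weight vector is a quotient of $M^{\mu_0}$, hence equals $L^{\mu_0}$. By induction on $\dim V$, semisimplicity of $V$ reduces to showing that every embedding $L^{\mu_0} \hookrightarrow V$ splits. For this I would prove that in the typical regime $M^\mu$ is already projective in $\mcC$: since $M^\mu$ is induced from the Borel and $\UHbar{\eta^-}$ is finite-dimensional (after imposing $X_{-\alpha}^{r_\alpha} = 0$), this reduces to verifying that $\mbbC_\mu$ is projective over the Borel subalgebra when $\mu$ is typical, which in turn follows from nondegeneracy of the Shapovalov/De Concini--Kac form on $M^\mu$.

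The main obstacle is this last step---upgrading a weight-space splitting to a $\UHbar{\mfg}$-linear one. Weight-space complements are trivially available because the $H_i$ act semisimply; the content of the argument is that no cohomological obstruction arises when the highest weight is typical, and this is precisely what the nondegeneracy of the Shapovalov form (the determinant formula preceding Proposition \ref{irreducible}) is designed to rule out. The strategy parallels the arguments of \cite{GP1,GP2} for odd roots and \cite{CGP} for $\mfsl{2}$ at even roots, now adapted to the present definition of $\UHbar{\mfg}$ via the ideal generated by $\{X_{\pm\alpha_i}^{r_{\alpha_i}}\}$, a choice precisely tuned so that the PBW basis and Shapovalov determinant formulas retain the shape needed for the typicality argument.
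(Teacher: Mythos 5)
Your grading by $\mft = \mfh^*/L_R$, your choice of singular locus $\mcX$ as the image of the atypical weights, and the symmetry argument all match the paper's setup. Your smallness argument is a legitimate alternative to the paper's: rather than invoking Baire category on the countable union of codimension-two images of the atypicality hyperplanes, the paper exhibits an explicit uncountable family of cosets $\overline{\mu^a}$ with $(\mu^a)_\alpha$ purely imaginary and observes that no two can lie in a common translate of $\mcX$; both routes work, and yours is arguably cleaner though less self-contained.

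However, the decisive step---showing that typical Vermas are projective, so that each generic fiber $\mcC_{\overline{\lambda}}$ is semisimple---has a genuine gap. Your reduction to ``$\mbbC_\mu$ is projective over the Borel subalgebra when $\mu$ is typical'' is false: $\mbbC_\mu$ is never projective as a weight module over the Borel, for \emph{any} $\mu$. Indeed, for each simple root $\alpha_i$ one has the two-dimensional non-split extension $0 \to \mbbC_{\mu+\alpha_i} \to E \to \mbbC_\mu \to 0$ in which $X_i$ maps the top onto the bottom, and this extension exists with no constraint on $\mu$ because $\overline{U}_q^H(\eta^+)$ is a local augmented algebra with no one-dimensional projectives. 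Frobenius reciprocity therefore cannot deliver projectivity of $M^\mu$ along this route, and ``nondegeneracy of the Shapovalov form'' does not by itself repair it. What the paper actually does, following \cite[Lemma~7.1]{CGP2}, is a direct construction: set $X_\pm := \prod_k X_{\pm\beta_k}^{r_{\beta_k}-1}$, note that for typical $\lambda$ the element $X_+X_-$ acts on $v_\lambda \in M^\lambda$ by a nonzero scalar $\nu$ (this is where irreducibility, hence the Shapovalov determinant, is used), and then given any surjection $f\colon M \twoheadrightarrow M^\lambda$ choose $w \in f^{-1}(\nu^{-1}v_\lambda)$. The vector $w' := X_+X_-w$ is maximal because $X_+$ is a socle element of $\overline{U}_q^H(\eta^+)$, and $f(w') = v_\lambda \neq 0$; the universal property of $M^\lambda$ then produces a splitting $g\colon M^\lambda \to M$ with $g(v_\lambda) = w'$. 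This is the content your sketch is missing: you need a concrete way to lift a highest-weight vector across a surjection, not a projectivity statement over the Borel.
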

\begin{proof}
Let $\mcX$ be the subset of $\mft$ consisting of equivalence classes $\overline{\lambda}$ corresponding to weights $\lambda$ such that $\lambda_{\alpha} \in  \frac{(3+(-1)^{\ell})g_{\alpha}}{4}\mathbb{Z}$ for some $\alpha \in \Delta^+$. Notice that this implies that $\mu_{\alpha} \in  \frac{(3+(-1)^{\ell})g_{\alpha}}{4}\mathbb{Z}$ for all $\mu \in \overline{\lambda}$ since $\mu$ being comparable to $\lambda$ implies that
\begin{equation} \label{eqd} \mu_{\alpha}=\lambda_{\alpha} \; \mathrm{mod} \; d_{\alpha} \mbbZ. \end{equation}
Equation \eqref{eqd} is easy to see for simple roots and for non-simple roots, one uses invariance of $\langle -,- \rangle$ under the action of the Weyl group. $\mcX$ is clearly symmetric. To see $\mathcal{X}$ is small, consider the subset 
\[ A:= \{ \overline{\mu^a} \in \mft \; | \;  (\mu^a)_{\alpha}=ai \text{ for some $a\not = 0 \in \mathbb{R}$ and all $\alpha \in \Delta^+$}\} \subset \mft \]
where $i=\sqrt{-1}$. Each $\overline{\mu^a}$ is distinct since the corresponding weights do not differ by elements of the root lattice. Suppose that $\overline{\mu^a} +\mcX=\overline{\mu^b} + \mcX$ for some $a \not = b$, both non-zero. Then $\overline{\mu^a}-\overline{\mu^b}\in \mcX$, a contradiction since any element $\mu \in \overline{\mu^a}-\overline{\mu^b}$ will have purely imaginary $\mu_{\alpha}$ for each $\alpha$, so $\mu$ cannot belong to $\mcX$. Therefore, $A$ cannot be covered by finitely many translations of $\mcX$ and $\mcX$ is small. Notice that by construction, $\mft \setminus \mcX$ consists of equivalence classes $\overline{\lambda}$ such that every weight of every module $V \in \mcC_{\overline{\lambda}}$ is typical.

The argument in \cite[Lemma 7.1]{CGP2} can be applied to our setting to show that the irreducibles of typical weight are projective. Recall that $\{X_{\pm \beta_1}^{k_1}X_{\pm \beta_2}^{k_2} \cdots X_{\pm \beta_N}^{k_N} \;| \; 0 \leq k_i < r_{\beta_i} \}$ is a basis of $\overline{U}_q^H(\eta^{\pm})$. Let $X_+:=\prod\limits_{k=1}^N X_{\beta_k}^{r_{\beta_k}-1}$ and $X_{-}:=\prod\limits_{k=1}^N X_{-\beta_k}^{r_{\beta_k}-1}$ denote the highest and lowest weight vectors of $\overline{U}_q^H(\mfg)$. Suppose $\lambda \in \mfh^*$ is of typical weight and that there is a surjection $f:M \to M^{\lambda}$ for some $M \in \mcC$. $M^{\lambda}$ is a Verma module so clearly $X_-v_{\lambda} \not = 0$ where $v_{\lambda}$ is a highest weight vector of $M^{\lambda}$. Since $M^{\lambda}$ is irreducible, there exists an element $X \in \overline{U}_q^H(\mathfrak{g})$ such that $XX_-v_{\lambda}=\nu v_{\lambda}$ for some non-zero scalar $\nu \in \mathbb{C}$. However, the only elements $X \in \overline{U}_q^H(\mathfrak{g})$ such that $XX_-v_{\lambda}$ has weight $\lambda$ are scalar multiples of $X_+$. Therefore, $X_+X_-v_{\lambda}=\nu v_{\lambda}$ for some non-zero $\nu \in \mathbb{C}$. Then there is a vector $w \in f^{-1}(\frac{1}{\nu} v_{\lambda})$. The vector $w':=X_+X_-w$ is maximal since $X_+$ is maximal in $\overline{U}_q^H(\eta^+)$ (i.e. $XX_+=0$ for all $X \in \overline{U}_q^H(\eta^+)$) and non-zero since $f(w')=v_{\lambda}$. Therefore, by the universal property of Verma modules there is a map $g:M^{\lambda} \to M$ such that $g(v_{\lambda})=w'$ and $f:M \to M^{\lambda}$ splits. Hence, every $\mcC_{\overline{\lambda}}$ with $\overline{\lambda} \in \mft \setminus \mcX$ contains only projective irreducible modules and is therefore semisimple. 
\end{proof}

It is well known (see \cite[Section 5.6]{GP1}, for example) that the duality morphisms are given by
{\allowdisplaybreaks
\begin{align*}
\overrightarrow{\mathrm{coev}}_V&: \mathds{1} \rightarrow V \otimes V^*, \qquad 1\mapsto \sum\limits_{i \in I} v_i \otimes v_i^*,\\
\overrightarrow{\mathrm{ev}}_V&:V^* \otimes V \rightarrow \mathds{1}, \qquad f \otimes v \mapsto f(v),\\
\overleftarrow{\mathrm{coev}}_V&:\mathds{1} \rightarrow V^* \otimes V, \qquad 1 \mapsto \sum\limits_{i \in I} v_i^* \otimes K_{2\rho}^{r-1}v_i,\\
\overleftarrow{\mathrm{ev}}_V&: V \otimes V^* \rightarrow \mathds{1}, \qquad v \otimes f \mapsto f(K_{2\rho}^{1-r}v),\\
\end{align*}
where $\mathds{1}$ is the $1$-dimensional module of weight zero and $\{v_i\}_{i \in I}$, $\{v_i^*\}_{i \in I}$ are dual bases of $V$ and $V^*$. The pivotal structure on $\mcC$ is the monoidal natural transformation $\delta:\mathrm{Id}_{\mcC} \to (-)^{**}$ defined by components 
\[\delta_V=\psi_V \circ \ell_{K_{2\rho}^{1-r}}:V \to V^{**}\]
 where $\psi_V:V \to V^{**}$ is the canonical embedding $\psi_V(v)(f)=f(v)$ and $\ell_x(v)=xv$ denotes left multiplication. It was shown in \cite[Subsection 5.8]{GP1} (see also \cite[Subsection 4.2]{GP2}) that the category of finite dimensional weight modules for the unrolled restricted quantum group $\UHbar{\mfg}$ is braided.

The proof of this statement is given for odd roots, but holds for even roots as well with very minor adjustments. The proof uses a projection map $\bar{p}:U_h(\mfg) \to U^{<}$ from the $h$-adic quantum group $U_h(\mfg)$ to the $\mbbC[[h]]$-module generated by the monomials 
\begin{equation}\label{eqmon} \prod\limits_{i=1}^n H_{i}^{m_i} \prod\limits_{j_1=1}^N X_{\beta_{j_1}}^{k_{j_1}} \prod\limits_{j_2=1}^N X_{-\beta_{j_2}}^{k_{j_2 }}\end{equation}
with $m_i \in \mbbZ_{\geq 0}$, $0 \leq k_{j_1},k_{j_2} <r$. If we generalize this by defining $U^<$ to be the $\mbbC[[h]]$-module generated by the monomials in equation \eqref{eqmon} with $m_i \in \mbbZ_{\geq 0}$ and $0 \leq k_{j_s}< r_{\beta_{j_s}}$, then the proof follows verbatim as in \cite[Subsection 5.8]{GP1}. This yields an $R$-matrix $R^h:=\mathcal{H}\tilde{R}$ where 
\begin{align}
\mathcal{H}&:=q^{\sum\limits_{i,j}d_i(A^{-1})_{ij}H_i \otimes H_j},\\
\tilde{R}&:=\prod\limits_{i=1}^N\left( \sum\limits_{j=0}^{r_{\beta_i}-1} \frac{\left( (q_{\beta_i}-q_{\beta_i}^{-1})X_{\beta_i} \otimes X_{-\beta_i}\right)^j}{[j;q_{\beta_i}^{-2}]!}\right),
\end{align}
where $\{\beta_i\}_{i=1}^{N}$ is the ordered bases for $\Delta^+$ as described in Subsection \ref{subsectionPBW}, $[j;q]!$ is defined in equation \eqref{jq}, and $q_{\beta}=q^{\langle \beta,\beta \rangle /2}$. 

\begin{remark}
The proof of the existence of the braiding on the category of finite dimensional weight modules for $U_q^H(\mfg)$ in \cite[Subsection 5.8]{GP1} relies on the action of $\bar{p} \otimes \bar{p}$ on the $R$-matrix $R^h$ of the $h$-adic quantum group $U_h(\mfg)$. However, $R^h$ is an element of the $h$-adic completion $U_h(\mfg) \tilde{\otimes} U_h(\mfg)$, not an element of $U_h(\mfg) \otimes U_h(\mfg)$. This can be easily remedied by extending $\bar{p} \otimes \bar{p}$ to the $h$-adic completion as follows (we refer the reader to \cite[Chapter 16]{K} for any necessary background):

Let $K=\mbbC[[h]]$, for any $K$-module $M$ let $M_j:=M/h^jM$, and for any $K$-linear map $f:M \to M$, define $f_j: M_j \to M_j$ by 
\[ f_j( x+h^jM)=f(x)+h^jM. \]
This map is well-defined since $f$ is $K$-linear, so if $x+h^jM=y+h^jM$, then $x=y+h^jz$ for some $z\in M$, so
\begin{align*}
f_j(x+h^jM)&=f(x)+h^jM\\
&=f(y+h^jz)+h^jM\\
&=f(y)+h^jf(z)+h^jM\\
&=f(y)+h^jM\\
&=f_j(y+h^jM)
\end{align*}
Let $p=\iota \circ \bar{p}:U_h(\mfg) \to U_h(\mfg)$ where $\iota:U^{<} \hookrightarrow U_h(\mfg)$ is the inclusion map. Taking $M=U_h(\mfg) \otimes U_h(\mfg)$ and $f=p \otimes p$, we can construct the associated inverse limit $p \tilde{\otimes} p:U_h(\mfg) \tilde{\otimes} U_h(\mfg) \to U_h(\mfg) \tilde{\otimes} U_h(\mfg)$ where $p \tilde{\otimes} p:=\lim\limits_{\substack{\leftarrow \\ j}}(p \otimes p)_j$ satisfies
\begin{equation}\label{lim} \pi_j \circ (p \tilde{\otimes}p)=(p \tilde{\otimes} p)_j \circ \pi_j\end{equation}
and $\pi_j:U_h(\mfg) \tilde{\otimes}U_h(\mfg) \to (U_h(\mfg) \otimes U_h(\mfg))_j$ is the projection map. Let $R_j^h=\pi_j(R^h)$ denote the image of the $R$-matrix for the $h$-adic quantum group in $(U_h(\mfg) \otimes U_h(\mfg))_j$, then it is easy to see that 
\[ (p\tilde{\otimes} p)_j(R^h_j)=(p \tilde{\otimes} \mathrm{Id})_j(R_j^h)=(\mathrm{Id} \tilde{\otimes} p)_j(R_j^h). \]
for all $j$. We therefore have
\[ \pi_j \circ (p \tilde{\otimes} p)(R^h)=\pi_j \circ (p \tilde{\otimes} \mathrm{Id})(R^h) =\pi_j \circ (\mathrm{Id} \tilde{\otimes} p)(R^h)\]
for all $j$ by Equation \eqref{lim}, so it follows that
\begin{equation}\label{Rcomp} R^{<}:=(p \tilde{\otimes} p)(R^h)=(p \tilde{\otimes} \mathrm{Id})(R^h) = (\mathrm{Id} \tilde{\otimes} p)(R^h) 
\end{equation}
This equation replaces \cite[Equation 42]{GP1}, then the proof that the category of weight modules for $\UHbar{\mfg}$ is braided follows exactly as in \cite[Subsection 5.8]{GP1}, replacing $\otimes$ with $\tilde{\otimes}$ when necessary.
\end{remark}

Let $M\in \mcC$ be simple with maximal vector $m \in M(\lambda)$, and define the family of morphisms $\theta_V:V \to V$ by
\[ \theta_V:= (Id_V \otimes \overleftarrow{\mathrm{ev}}_V) \circ ( c_{V,V} \otimes Id_{V^*} )\circ ( Id_V \otimes \overrightarrow{\mathrm{coev}}_V)\] 
where $c_{V,V}$ is the braiding. An easy computation shows that
\[ \theta_M(m)=q^{\langle \lambda, \lambda+2(1-r) \rho \rangle}m.\]
Hence, on any simple module $M \in \mcC$ with maximal vector $m \in M(\lambda)$, 
\begin{equation} \label{twist}\theta_M=q^{\langle \lambda, \lambda+2(1-r)\rho \rangle} Id_M\\
 \end{equation}

We then observe, as in \cite{GP2}[Subsection 4.4], that $\theta_{(L^{\lambda})^*}=(\theta_{L^{\lambda}})^*$ for all generic simple modules ($L^{\lambda}$ such that $\overline{\lambda} \in \mft \setminus \mcX$), where $f^*$ denotes the right dual. So, by \cite[Theorem 9]{GP2}, $\mcC$ is ribbon.
\begin{corollary}\label{ribbon}
$\mcC$ is a ribbon category.
\end{corollary}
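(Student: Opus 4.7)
The plan is to invoke \cite[Theorem 9]{GP2}, which provides a ribbon structure on any generically semisimple braided pivotal category whose candidate twist $\theta$ satisfies $\theta_{V^*} = (\theta_V)^*$ on each generic simple. All three structural hypotheses have already been assembled above: $\mcC$ is pivotal via the group-like $K_{2\rho}^{1-r}$, braided via the $R$-matrix $R = \mcH\tilde{R}$, and generically $\mft$-semisimple by Proposition \ref{gens}. The candidate twist is the endomorphism $\theta_V$ defined immediately before the corollary, and \eqref{twist} shows that it acts on any simple $L^\lambda$ as the scalar $q^{\langle\lambda, \lambda+2(1-r)\rho\rangle}$.

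It therefore remains to verify $\theta_{(L^\lambda)^*} = (\theta_{L^\lambda})^*$ for every generic simple $L^\lambda$, i.e.\ whenever $\overline{\lambda} \in \mft\setminus\mcX$. The right-hand side acts on $(L^\lambda)^*$ by the same scalar $q^{\langle\lambda,\lambda+2(1-r)\rho\rangle}$, so I would next determine the highest weight $\mu$ of $(L^\lambda)^*$ and re-apply \eqref{twist}. In the generic regime we have $L^\lambda = M^\lambda$, so Theorem \ref{PBW} identifies the lowest weight of $L^\lambda$ as $\lambda - \sum_{\alpha\in\Delta^+}(r_\alpha - 1)\alpha$; consequently $\mu = -\lambda + \sum_{\alpha}(r_\alpha-1)\alpha$ and $\theta_{(L^\lambda)^*} = q^{\langle\mu,\mu+2(1-r)\rho\rangle}\mathrm{Id}$.

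The remaining, and only nontrivial, step is to check the scalar identity $q^{\langle\mu,\mu+2(1-r)\rho\rangle} = q^{\langle\lambda,\lambda+2(1-r)\rho\rangle}$ in $\mathbb{C}$. When $\gcd(d_i,r) = 1$ for all $i$ --- automatic in the odd-root case treated in \cite{GP2} --- every $r_\alpha$ equals $r$ and the weight shift becomes $\sum_\alpha(r-1)\alpha = 2(r-1)\rho$, so the identity reduces by direct expansion to the one verified in \cite[Subsection 4.4]{GP2}. In the general setting where some $r_\alpha$ can be strictly smaller than $r$, the main technical obstacle is to rerun this expansion modulo $\ell\mathbb{Z}$; this is feasible because $q^{2r} = 1$ and each $2d_\alpha r_\alpha = 2(d_\alpha/g_\alpha)r$ is an integer multiple of $2r$, so the cross-terms produced by the shift $\sum_\alpha(r_\alpha-1)\alpha$ vanish in the exponent of $q$. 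Once this identity is in hand, \cite[Theorem 9]{GP2} applies and upgrades the braided pivotal structure on $\mcC$ to a ribbon one.
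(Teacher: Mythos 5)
Your overall strategy matches the paper's: establish that $\mcC$ is pivotal, braided, and generically $\mft$-semisimple, reduce the compatibility $\theta_{(L^\lambda)^*}=(\theta_{L^\lambda})^*$ for generic simples to a scalar identity on highest weights, and then invoke \cite[Theorem 9]{GP2}. Your identification of the highest weight of the dual as $\mu=-\lambda+\sigma$ with $\sigma:=\sum_{\alpha\in\Delta^+}(r_\alpha-1)\alpha$ is correct, and so is the reduction to comparing $q^{\langle\mu,\mu+2(1-r)\rho\rangle}$ with $q^{\langle\lambda,\lambda+2(1-r)\rho\rangle}$.

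The gap is in the final scalar check when some $r_\alpha<r$. Expanding the difference of exponents gives
\[
\langle\mu,\mu+2(1-r)\rho\rangle-\langle\lambda,\lambda+2(1-r)\rho\rangle
=-2\big\langle\lambda,\;\sigma+2(1-r)\rho\big\rangle+\big\langle\sigma,\;\sigma+2(1-r)\rho\big\rangle,
\]
and $\sigma+2(1-r)\rho=\sum_{\alpha\in\Delta^+}(r_\alpha-r)\alpha$. When every $r_\alpha=r$ this vanishes identically and the twist scalars coincide on the nose, as you observe. But whenever some $r_\alpha<r$, the $\lambda$-dependent term $-2\sum_\alpha(r_\alpha-r)\langle\lambda,\alpha\rangle$ is a nonzero linear functional of $\lambda$, and its value on a generic complex weight is not an integer, let alone an element of $\ell\mathbb{Z}$. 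Your observation that $2d_\alpha r_\alpha\in 2r\mathbb{Z}$ is true but does not touch this term: the coefficient of $\langle\lambda,\alpha\rangle$ here is $r_\alpha-r$, which is not a multiple of $r_\alpha$ or of $r$. So the asserted ``cancellation of cross-terms modulo $\ell$'' is unjustified, and as written your argument only establishes the twist compatibility in the case $\gcd(d_i,r)=1$. The paper itself defers to \cite[Subsection 4.4]{GP2} without redoing the computation in this generality, so the subtlety you have surfaced is genuine; closing it seems to require a pivotal element adapted to $\sigma$ (so that the twist scalar is invariant under $\lambda\mapsto -\lambda+\sigma$) rather than $K_{2\rho}^{1-r}$, or some other argument replacing the direct scalar comparison.
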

We also observe here that $\mcC$ has trivial M\"{u}ger center (recall Definition \ref{muger}):
\begin{proposition}\label{Muger}
$\mcC$ has no non-trivial transparent objects (i.e. $\mcC$ has trivial M\"{u}ger center).
\end{proposition}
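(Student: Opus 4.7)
The plan is to show that any transparent $Y \in \mcC$ must have $\Gamma(Y) = \{0\}$, and then deduce that such a $Y$ is a direct sum of copies of the tensor unit $\mathds{1}$. First I would fix a weight $\nu \in \Gamma(Y)$ that is maximal with respect to the partial order on $\mfh^*$ and choose $0 \neq y \in Y(\nu)$. By maximality, $Y(\nu + \beta) = 0$ and hence $X_\beta y = 0$ for every $\beta \in \Delta^+$.

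The key step is a direct evaluation of the monodromy, testing transparency against the simple modules $L^\lambda$ for typical $\lambda$. Let $v_\lambda \in L^\lambda$ be a highest weight vector, so $X_\beta v_\lambda = 0$ for every $\beta \in \Delta^+$ as well. Computing $(c_{Y,L^\lambda} \circ c_{L^\lambda, Y})(v_\lambda \otimes y)$ from $R = \mathcal{H}\tilde R$, every summand of $\tilde R - 1 \otimes 1$ has at least one positive root vector acting on the first tensor slot. Across both applications of $R$, first on $v_\lambda \otimes y$ and then after the intervening flip on $y \otimes v_\lambda$, the first tensor factor is always either $v_\lambda$ or $y$, so every higher $\tilde R$-term annihilates it. Only $\mathcal{H}$ then contributes, yielding
\[
(c_{Y,L^\lambda} \circ c_{L^\lambda, Y})(v_\lambda \otimes y) = q^{2\langle \nu, \lambda \rangle}\, v_\lambda \otimes y.
\]
Transparency of $Y$ forces $q^{2\langle \nu, \lambda \rangle} = 1$ for every typical $\lambda$. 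By Proposition \ref{typ} the typical locus in $\mfh^* \cong \mbbC^n$ is the complement of a countable union of proper affine hyperplanes, hence dense, whereas $\{\lambda : q^{2\langle \nu, \lambda \rangle} = 1\}$ is itself a countable union of proper hyperplanes whenever $\nu \neq 0$. A dense set cannot be contained in such a union, so $\nu = 0$.

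Since $\mcC$ is ribbon (Corollary \ref{ribbon}), its M\"uger center is closed under duals, so $Y^*$ is also transparent. Applying the previous argument to $Y^*$ shows every maximal weight of $Y^*$ is zero, i.e. every minimal weight of $Y$ is zero. Every weight of $Y$ thus lies in $\sum_i \mbbZ_{\geq 0}\alpha_i \cap -\sum_i \mbbZ_{\geq 0}\alpha_i = \{0\}$, giving $\Gamma(Y) = \{0\}$. On $Y = Y(0)$, each $H_i$ acts as zero, each $X_{\pm i}$ sends $Y(0)$ into $Y(\pm\alpha_i) = 0$, and each $K_\gamma$ acts as $1$; hence $Y$ is a direct sum of copies of $\mathds{1}$, which is the desired triviality statement for the M\"uger center.

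The main obstacle I expect is the clean execution of the monodromy computation: the surviving scalar $q^{2\langle \nu, \lambda \rangle}$ is cosmetic, but one must carefully use the triangular form of $\tilde R$ in the PBW basis of $\overline{U}_q^H(\eta^{\pm})$ and verify that after the $\tau$-flip the vector entering the second $R$ is still annihilated by the raising operators in the first slot. Everything after the monodromy calculation, namely density of the typical locus, closure of the M\"uger center under duals, and the final structural step $\Gamma(Y) = \{0\} \Rightarrow Y \cong \mathds{1}^{\oplus n}$, is routine.
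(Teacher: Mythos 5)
Your proof is correct, and it does share the central computational engine with the paper's argument: both reduce transparency to the observation that on vectors annihilated by all $X_\beta$ in the first slot, the double braiding $c \circ c$ acts by $q^{2\langle \lambda,\mu\rangle}$ via $\mathcal{H}$ alone, with $\tilde R$ contributing trivially. Your monodromy evaluation on $v_\lambda \otimes y$ and then (after the flip) on $y \otimes v_\lambda$ is exactly right, and you correctly note that the second application requires $X_\beta y = 0$, which holds because $\nu$ is a maximal weight of $Y$.

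Where you genuinely diverge from the paper is in the organization around that computation. The paper pairs the highest-weight vectors of two irreducibles $L^\lambda$, $L^\mu$, concludes that the only transparent irreducible is $\mathds{1}$, and then passes from a transparent $M$ to its composition series (all factors transparent, hence all $\cong \mathds{1}$), deducing $\Gamma(M) = \{0\}$. You instead pick an extremal weight vector of the putative transparent $Y$ directly, show any maximal weight is zero, use closure of the M\"uger center under duals to kill the minimal weights too, and then squeeze $\Gamma(Y)$ to $\{0\}$. This avoids invoking composition series but pays for it with the dual-module step. Both routes are legitimate; the paper's is marginally shorter because it never needs to consider $Y^*$.

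Two small points you could tighten. First, the restriction to \emph{typical} $\lambda$ is unnecessary: the cancellation in $\tilde R$ only uses that $v_\lambda$ is a highest-weight vector of $L^\lambda$ and that $y$ sits in a maximal weight space of $Y$, which holds for every $\lambda \in \mfh^*$. Testing against all $\lambda$ you get $q^{2\langle\nu,\lambda\rangle} = 1$ for all $\lambda \in \mfh^*$, and since $\langle\nu,-\rangle$ is surjective onto $\CC$ whenever $\nu \neq 0$, you conclude $\nu = 0$ immediately — no density argument required. Second, as stated the claim "a dense set cannot be contained in such a union" is false in general ($\QQ$ is dense in $\RR$ and is a countable union of points); what you actually need is that $\mfh^* \cong \CC^n$ is not a countable union of proper affine hyperplanes, so the typical locus (complement of one such union) cannot sit inside another. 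That Baire-category argument is sound, but it is overkill here given the simpler observation above.
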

\begin{proof}
We begin by showing that subquotients of transparent objects are transparent. Suppose $Y \in \mcC$ is transparent and $M$ a subobject of $Y$ with embedding $\iota:M \hookrightarrow Y$. Since $Y$ is transparent, we have
\[ c_{Y,X} \circ c_{X,Y}=\mathrm{Id}_{X \otimes Y} \]
for all $X \in \mcC$. Therefore, multiplying both sides of this equation by $\mathrm{Id}_X \otimes \iota$ gives
\[ c_{Y,X} \circ c_{X,Y} \circ (\mathrm{Id}_X \otimes \iota)=\mathrm{Id}_X \otimes \iota. \]
Then by applying naturality of the braiding we find that
\[ (\mathrm{Id}_X \otimes \iota) \circ c_{M,X} \circ c_{X,M} = \mathrm{Id}_X \otimes \iota. \]
$\iota:M \to Y$ is monic so it follows from exactness of the tensor product that $\mathrm{Id} \otimes \iota$ is also monic and therefore
\[ c_{M,X} \circ c_{X,M}=\mathrm{Id}_{X \otimes M} \]
so $M$ is transparent. A similar argument shows that quotients of transparent objects are transparent, and therefore all subquotients of transparent objects are transparent. Given a pair of irreducible modules $L^{\lambda},L^{\mu} \in \mcC$, it is easy to see that the braiding $c=\tau \circ \mcH \tilde{R}$ acts as $\tau \circ \mcH$ on the product of highest weights $v_{\lambda} \otimes v_{\mu} \in L^{\lambda} \otimes L^{\mu}$. We therefore see that
\begin{align*}
c_{L^{\mu},L^{\lambda}} \circ c_{L^{\lambda},L^{\mu}}(v_{\lambda} \otimes v_{\mu})=q^{2\langle \lambda, \mu \rangle} Id.
\end{align*}
Hence, there is no irreducible object transparent to all other irreducible objects since there is no weight $\lambda \in \mfh^*$ such that $\langle \lambda, \mu \rangle \in \frac{\ell}{2}\mathbb{Z}$ for all $\mu \in \mfh^*$. In particular, there are no non-trivial irreducible transparent objects. If some $M \in \mcC$ were transparent, then all of its subquotients and, in particular, the factors appearing in its composition series must also be transparent, so any transparent object has composition factors isomorphic to $\mathds{1}$. Any such module has character $\mathrm{ch}[M]=0$ (recall Equation \eqref{character}), but any module in $\mcC$ with vanishing character is a direct sum of $\mathrm{dim}(M)$ copies of $\mathds{1}$. Indeed, the elements $H_1,...,H_n$ act semisimply on $M$ (as they do on all modules in $\mcC$) so they must act by zero since $\mathrm{ch}[M]=0$, and it follows from Equation \eqref{rootlattice} that $X_{\pm i} m \in M(\pm \alpha_i)= \emptyset$, so $X_{\pm i}m=0$ for all $m\in M$. Recalling that $K_{\gamma}=\prod\limits_{i=1}^nq_i^{k_iH_i}$ as operators on $\mcC$, we see that
\[ X_{\pm i}m=H_im=0 \quad \mathrm{and} \quad K_{\gamma}m=1\]
for all $m \in M$. Hence, $M$ a direct sum of $\mathrm{dim}(M)$ copies of $\mathds{1}$, and $\mcC$ has trivial M\"{u}ger center.
\end{proof}

\subsection{Duality} \label{duality}

Given any $M \in \C$, the antipode $S: \UHbar{\mfg} \to \UHbar{\mfg}$ defines a module structure on the dual $M^*=\mathrm{Hom}_{\mathbb{C}}(M,\mathbb{C})$ by
\[ (x \cdot f) (m)=f(S(x)m) \]

for each $f \in M^*, m \in M$, and $x \in \UHbar{\mfg}$. Let $M^*_{\omega}$ be the module obtained by twisting $M^*$ by the automorphism $\omega$ defined in equation \eqref{eqw}, allowing $x \in \UHbar{\mfg}$ to act on $M^{*}$ as $\omega(x)$ and for convenience set $\check{M}:=M^*_{\omega}$, which is easily seen to lie in $\mcC$. Note that $\check{M}$ is therefore the dual defined with respect to the anti-homomorphism $S \circ \omega: \UHbar{\mfg} \to \UHbar{\mfg}$. It is easy to check that this map is an involution, i.e. $S \circ \omega \circ S \circ \omega = Id$. We therefore have that the canonical map $\phi:M \to \check{\check{M}}$ is an isomorphism, since
\begin{align*}
 X \cdot \phi(v)(f)&=\phi(v)(S (w(X))\cdot f)=\phi(v)(f \circ \Pi(X))=f(X \cdot v)=\phi(X \cdot v)(f),
\end{align*}
where $\Pi:\UHbar{\mfg} \to \mathrm{End}(M)$ is the representation defining the action on $M$. Hence, $\check{\check{M}} \cong M$. The (contravariant) functor $M \mapsto \check{M}$ is exact as the composition of exact functors (taking duals in a tensor category and twisting by automorphisms), and one sees immediately (since $S(H_i)=-H_i$) that $\mathrm{dim}\check{M}(\lambda)=\mathrm{dim}M(\lambda)$, so $\mathrm{ch}[\check{M}]=\mathrm{ch}[M]$. Exactness implies that $M$ is simple iff $\check{M}$ is, and then $\mathrm{ch}[\check{L}^{\lambda}]=\mathrm{ch}[L^{\lambda}]$ implies $\check{L}^{\lambda} \cong L^{\lambda}$. We therefore obtain the following proposition:

\begin{proposition}\label{dfunc}
\begin{itemize}\item[]
\item The contravariant functor $M \mapsto \check{M}$ is exact and $\check{\check{M}} \cong M$.
\item $\mathrm{ch}[\check{M}]=\mathrm{ch}[M]$ for all $M \in \C$.
\item $\check{M}$ is simple iff $M$ is simple.
\item $\check{L}^{\lambda} \cong L^{\lambda}$ for all $\lambda \in \mfh^*$.
\end{itemize}
\end{proposition}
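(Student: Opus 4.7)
The plan is to verify each bullet of the proposition in sequence, using the involutive nature of the twisted dual and functorial properties of the constructions involved. First I would check that $\check{M}$ genuinely lies in $\mcC$: the linear dual $M^*$ carries the standard $\UHbar{\mfg}$-action via the antipode $S$, and twisting by the algebra automorphism $\omega$ of \eqref{eqw} produces another module. Since $(S\circ\omega)(H_i) = S(-H_i) = H_i$, the action of $H_i$ on $\check{M}$ is (up to transpose) the same as on $M$, so $\check{M}$ inherits a weight-space decomposition and the identity $K_\gamma = \prod_i q_i^{k_i H_i}$ continues to hold.

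Next I would show $\check{\check{M}} \cong M$. The key algebraic fact to check is that $S \circ \omega$ is an involution on $\UHbar{\mfg}$, i.e. $S\omega S\omega = \Id$, which one verifies on the generators $X_{\pm i}, K_\gamma, H_i$ directly from \eqref{coK}--\eqref{coH} and \eqref{eqw}. With this in hand, the canonical evaluation map $\phi_V:V\to V^{**}$, $\phi_V(v)(f)=f(v)$, becomes $\UHbar{\mfg}$-linear when both sides are twisted by $\omega$ on each dualization; this is the short computation
\[
X\cdot \phi(v)(f) = \phi(v)\bigl(S(\omega(X))\cdot f\bigr) = f\bigl(\omega(S\omega(X))\cdot v\bigr) = f(X\cdot v) = \phi(X\cdot v)(f),
\]
so $\phi$ gives the desired natural isomorphism.

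For exactness, I would note that $(-)^*$ is an exact contravariant functor on finite-dimensional vector spaces, and the twist $V \mapsto V_\omega$ along an algebra automorphism is an exact covariant autoequivalence of $\mcC$; the composite $M \mapsto \check{M}$ is therefore exact and contravariant. For the character identity, I would use that $S\omega$ fixes each $H_i$, so every weight space satisfies $\dim\check{M}(\lambda) = \dim M^*(\lambda) = \dim M(\lambda)$, giving $\ch[\check{M}] = \ch[M]$. Simplicity then transfers under $\check{(-)}$ because an exact contravariant functor sends composition series to composition series (reversed), and $\check{\check{M}} \cong M$ guarantees the equivalence in both directions.

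Finally, for $\check{L}^\lambda \cong L^\lambda$: by the previous bullets $\check{L}^\lambda$ is a simple object of $\mcC$ with the same character as $L^\lambda$. The only possible obstacle is to rule out that $\check{L}^\lambda$ could be some other $L^\mu$ with an accidentally matching character, but since simples in $\mcC$ are classified by their highest weight and $\ch[\check{L}^\lambda]=\ch[L^\lambda]$ pins down that highest weight uniquely, we conclude $\check{L}^\lambda \cong L^\lambda$. The only mildly delicate step is the involutivity of $S\circ\omega$ and the $\UHbar{\mfg}$-linearity of $\phi$; everything else is formal once the weight-space dimensions are identified via $(S\omega)(H_i)=H_i$.
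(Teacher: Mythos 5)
Your argument is essentially the paper's own: identify $\check{M}=M^*_\omega$ as the dual with respect to the anti-involution $S\circ\omega$, verify $(S\omega)^2=\Id$ on generators, use that to show the canonical evaluation $\phi:M\to\check{\check M}$ is $\UHbar{\mfg}$-linear, get exactness as a composite of duality and a twist by an automorphism, get character-preservation from $(S\omega)(H_i)=H_i$, and read off simplicity and $\check L^\lambda\cong L^\lambda$ from there. One small slip in your displayed chain: the middle expression should be $f\bigl((S\omega)^2(X)\cdot v\bigr)$, not $f\bigl(\omega(S\omega(X))\cdot v\bigr)$ — note $\omega(S\omega(X))$ is generally not $X$ (e.g.\ $\omega(S\omega(X_i))=-K_{-\alpha_i}X_i$), whereas $S\bigl(\omega(S\omega(X))\bigr)=(S\omega)^2(X)=X$, which is what the involutivity gives you and what the argument actually needs.
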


 Recall that a filtration, or series, for a module $M$ is a family of proper submodules ordered by inclusion
\begin{equation*}
0=M_0 \subset M_1 \subset \cdots \subset M_{n-1} \subset M_n=M.
\end{equation*}
A series for a module $M$ is called a composition series if successive quotients are irreducible modules: $M_k/M_{k-1} \cong L^{\lambda_k}$ for some $\lambda_k \in \mfh^*$. Similarly, a series is called a Verma (or standard) series if successive quotients are Verma modules: $M_k/M_{k-1} \cong M^{\lambda_k}$ for some $\lambda_k \in \mfh^*$. 

We have already observed in Proposition \ref{gens} that irreducible modules of typical weight are projective. Given any $V \in \mcC$ and $\lambda \in \mfh^*$ typical, we have a surjection
\begin{equation}\label{enoughproj} \overleftarrow{\mathrm{ev}}_{L^{\lambda}}\otimes Id_V : L^{\lambda} \otimes (L^{\lambda})^* \otimes V \to \mathds{1} \otimes V \cong V \end{equation}
where $L^{\lambda} \otimes (L^{\lambda})^* \otimes V$ is projective since projective modules form an ideal in pivotal categories (see \cite[Lemma 17]{GPV}), so $\mcC$ has enough projectives and since every module in $\mcC$ is finite, every module in $\mcC$ has a projective cover. We denote by $P^{\lambda}$ the projective cover of $L^{\lambda}$, and it follows easily from the defining property of projective modules that $P^{\lambda}$ is also the projective cover of $M^{\lambda}$. Replacing $V$ in Equation \eqref{enoughproj} by an arbitrary Verma module $M^{\mu}$ and noting $M^{\lambda} \cong L^{\lambda}$ for typical $\lambda$ (recall Proposition \ref{irreducible} and \ref{typ}), we obtain a surjection from the projective module $M^{\lambda} \otimes (M^{\lambda})^*\otimes M^{\mu}$ onto $M^{\mu}$. It then follows that $P^{\mu}$ appears in the decomposition of $M^{\lambda} \otimes (M^{\lambda})^* \otimes M^{\mu}$ into a direct sum of projective covers. It can be shown that $M^{\lambda} \otimes (M^{\lambda})^* \otimes M^{\mu}$ has a standard filtration by the argument in \cite[Theorem 3.6]{H}, and $P^{\mu}$ then has a standard filtration by the argument in \cite[Proposition 3.7 (b)]{H} since it is a summand of a module admitting a standard filtration. We denote by $(P^{\lambda}:M^{\mu})$ the multiplicity of $M^{\mu}$ in the standard filtration of $P^{\lambda}$, and $[M^{\mu}:L^{\lambda}]$ the multiplicity of $L^{\lambda}$ in the composition series of $M^{\mu}$. With the existence of the duality functor $M \to \check{M}$ satisfying the properties in Proposition \ref{dfunc}, BGG reciprocity follows as in \cite{H}:

\begin{proposition}\label{func}
BGG reciprocity holds in $\mcC$. That is, we have $(P^{\lambda}:M^{\mu})=[M^{\mu}:L^{\lambda}]$.
\end{proposition}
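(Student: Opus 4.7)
The strategy is the classical one from \cite[Theorem 3.11]{H}: compute $\dim \Hom_{\mcC}(P^\lambda, \check{M^\mu})$ in two different ways, once using projectivity of $P^\lambda$ and once using its standard filtration, and equate the results. The key inputs are the properties of the contravariant duality $M \mapsto \check{M}$ (Proposition \ref{dfunc}), the existence of a standard filtration on every $P^\lambda$ (established just before the statement), and the self-duality $\check{P^\lambda} \cong P^\lambda$ (Theorem \ref{selfdual}).

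I would first establish the Hom calculation $\dim \Hom_{\mcC}(M^\nu, \check{M^\mu}) = \delta_{\nu\mu}$. Applying the duality to the canonical surjection $M^\mu \twoheadrightarrow L^\mu$ yields an embedding $L^\mu \cong \check{L^\mu} \hookrightarrow \check{M^\mu}$, and exactness of the duality together with $\check{L^\nu} \cong L^\nu$ forces this to be precisely the socle of $\check{M^\mu}$. Given any nonzero hom $\varphi: M^\nu \to \check{M^\mu}$, the image is a highest weight submodule of weight $\nu$, whose nonzero socle must be contained in the socle $L^\mu$ of $\check{M^\mu}$. This forces $L^\mu$ to appear as a composition factor of the image, hence $\mu \leq \nu$; combined with $\nu \leq \mu$ (all weights of $\check{M^\mu}$ are bounded by $\mu$), this gives $\nu = \mu$, and the one-dimensional space $\check{M^\mu}(\mu)$ furnishes exactly one map up to scalar.

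The main technical obstacle is the vanishing $\Ext^1_{\mcC}(M^\nu, \check{M^\mu}) = 0$ for all $\nu, \mu$. Since $\check{P^\mu} \cong P^\mu$ by Theorem \ref{selfdual}, and the contravariant exact involution $M \mapsto \check{M}$ interchanges projectives and injectives, every $P^\mu$ is injective in $\mcC$ and hence is the injective hull of $L^\mu$. The inclusion $L^\mu \hookrightarrow \check{M^\mu}$ therefore extends to an embedding $\check{M^\mu} \hookrightarrow P^\mu$ with cokernel $Q$; applying $\Hom_{\mcC}(M^\nu, -)$ to $0 \to \check{M^\mu} \to P^\mu \to Q \to 0$ and using injectivity of $P^\mu$ gives
$$ 0 \to \Hom_{\mcC}(M^\nu, \check{M^\mu}) \to \Hom_{\mcC}(M^\nu, P^\mu) \to \Hom_{\mcC}(M^\nu, Q) \to \Ext^1_{\mcC}(M^\nu, \check{M^\mu}) \to 0. $$
An induction on the composition length of $\check{M^\mu}$, using the Hom calculation of the preceding paragraph to track the contributions of each composition factor of $Q$, shows that the connecting map vanishes; this mirrors \cite[Theorem 3.3(c)]{H}, and the delicate point is controlling the composition factors of $Q$ carefully enough to run the induction.

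With Ext-vanishing in hand, applying $\Hom_{\mcC}(-, \check{M^\mu})$ to a standard filtration $0 = P_0 \subset \cdots \subset P_n = P^\lambda$ with $P_k/P_{k-1} \cong M^{\lambda_k}$ preserves exactness, and summation yields $\dim \Hom_{\mcC}(P^\lambda, \check{M^\mu}) = \sum_{k=1}^n \dim \Hom_{\mcC}(M^{\lambda_k}, \check{M^\mu}) = (P^\lambda : M^\mu)$. On the other hand, since $P^\lambda$ is the projective cover of $L^\lambda$ we have $\dim \Hom_{\mcC}(P^\lambda, L^\nu) = \delta_{\lambda\nu}$, and exactness of $\Hom_{\mcC}(P^\lambda, -)$ lets us read off composition multiplicities: $\dim \Hom_{\mcC}(P^\lambda, \check{M^\mu}) = [\check{M^\mu}:L^\lambda] = [M^\mu : L^\lambda]$, where the last equality uses that the duality preserves characters and fixes simples (Proposition \ref{dfunc}). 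Equating the two expressions completes the proof.
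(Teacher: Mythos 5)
Your overall strategy---computing $\dim\Hom_{\mcC}(P^\lambda, \check{M^\mu})$ in two ways using the standard filtration of $P^\lambda$ on one side and exactness of $\Hom(P^\lambda,-)$ on the other---is precisely the Humphreys argument the paper invokes, and your computation of $\dim\Hom_{\mcC}(M^\nu, \check{M^\mu}) = \delta_{\nu\mu}$ is sound. However, your treatment of the Ext-vanishing introduces a circularity that cannot survive in the paper's logical order. You invoke Theorem \ref{selfdualP} to conclude that $P^\mu$ is the injective hull of $L^\mu$, but in the paper Theorem \ref{selfdualP} is proved \emph{after} Proposition \ref{func}, and its proof genuinely depends on BGG reciprocity: the concluding step ``$\mathrm{ch}[P^\lambda]=\mathrm{ch}[P^{\lambda_k}]$, so $\lambda_k=\lambda$'' rests on the fact that projective covers are determined up to isomorphism by their characters, which the paper derives from BGG reciprocity immediately before Theorem \ref{selfdualP}. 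So you may not use self-duality of $P^\mu$ here. In addition, the ``induction on the composition length of $\check{M^\mu}$'' that is supposed to kill the connecting map $\Hom(M^\nu,Q)\to\Ext^1(M^\nu,\check{M^\mu})$ is left unspecified, and it does not actually reproduce \cite[Theorem 3.3(c)]{H}, which is a direct argument rather than an injective-embedding one.

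The direct argument avoids all of this and uses only Proposition \ref{dfunc} and the universality of Verma modules. Given an extension $0\to\check{M^\mu}\to E\to M^\nu\to 0$, every weight of $E$ is $\leq\mu$ or $\leq\nu$. If $\nu\not<\mu$, then $\nu$ is a maximal weight of $E$, so any preimage $w\in E(\nu)$ of the generator $v_\nu\in M^\nu$ satisfies $X_iw=0$ for all $i$ and has the correct $H_i$- and $K_\gamma$-eigenvalues; the universal property of $M^\nu$ then produces a map $M^\nu\to E$ with $v_\nu\mapsto w$, whose composite with $E\to M^\nu$ is the identity, splitting the sequence. If instead $\nu<\mu$, apply the contravariant exact involution $\check{(\cdot)}$: it identifies $\Ext^1(M^\nu,\check{M^\mu})$ with $\Ext^1(\check{\check{M^\mu}},\check{M^\nu})=\Ext^1(M^\mu,\check{M^\nu})$, and since $\mu\not<\nu$ the previous case applies. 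This yields $\Ext^1_{\mcC}(M^\nu,\check{M^\mu})=0$ with no forward references, and the remainder of your argument then goes through as written.
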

Let $P \in \mcC$ be projective, then $P$ is isomorphic to a direct sum of projective covers of irreducible modules: $P \cong \bigoplus_{\lambda_k \in \mfh^*} c_{\lambda_k} P^{\lambda_k}$ for some $c_{\lambda_k} \in \mathbb{Z}_+$. Since unrolling the quantum group gives us additive weights, rather than multiplicative weights, the argument of \cite[Corollary 3.10]{H} can be used to show that projectives in $\mcC$ are determined up to isomorphism by their characters, which we include here for convenience. It is clearly enough to show that the characters determine the coefficients $c_{\lambda_k}$ of $P$, since then two projective modules with coinciding characters will both be isomorphic to the same sum of projective covers. We proceed by induction on length of standard filtrations.  If $P$ has length 1, it is a Verma module and the statement is trivial. If $P$ has length $>1$, let 
\[ 0 \subset M_1 \subset \cdots \subset M_n=P \]
with $M_k/M_{k-1} \cong M^{\mu_k}$ denote a standard filtration of $P$, so $\mathrm{ch}[P]=\sum_{i=1}^nd_{\mu_i}\mathrm{ch}[M^{\mu_i}]$. Let $\lambda$ be minimal s.t. $d_{\lambda} \not = 0$. By BGG reciprocity, $(P^{\mu}:M^{\lambda}) \not = 0$ iff $[M^{\lambda}:L^{\mu}] \not = 0$, so $\mu \leq \lambda$ and therefore by minimality of $\lambda$, $P^{\lambda}$ appears in the decomposition of $P$ with multiplicity $d_{\lambda}$, that is, $P=d_{\lambda}P^{\lambda} \oplus \tilde{P}$ for some projective module $\tilde{P}$. By the induction assumption, the coefficients of $\tilde{P}$ are determined by its character, so $P$ is determined by up to isomorphism by its character.
\begin{proposition}
Projective modules in $\mcC$ are isomorphic if their characters coincide.
\end{proposition}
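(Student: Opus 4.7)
The plan is to show that the multiplicities $c_{\lambda_k}$ in the decomposition $P \cong \bigoplus_{\lambda_k} c_{\lambda_k} P^{\lambda_k}$ of any projective $P \in \mcC$ into indecomposable projective covers are determined by $\mathrm{ch}[P]$; this immediately gives the proposition, since two projectives with equal characters then decompose into the same direct sum. Because every projective in $\mcC$ admits a standard filtration (as established just before the statement), I would induct on the length of such a filtration.

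For the base case of length one, $P$ is itself a Verma module $M^{\lambda}$; its character contains a unique highest weight, which picks out $\lambda$, and then $P = P^{\lambda}$ forces the decomposition. For the inductive step, fix a standard filtration
\[
0 = M_0 \subset M_1 \subset \cdots \subset M_n = P, \qquad M_k/M_{k-1} \cong M^{\mu_k},
\]
so that $\mathrm{ch}[P] = \sum_i d_{\mu_i}\, \mathrm{ch}[M^{\mu_i}]$ where $d_{\mu} = (P : M^{\mu})$. Choose $\lambda$ minimal (with respect to the partial order on $\mfh^*$) among the weights $\mu_i$ with $d_{\mu_i} \neq 0$. This minimal weight is detected purely from $\mathrm{ch}[P]$ together with the characters of Verma modules.

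The key step is to identify the coefficient $c_{\lambda}$ of $P^{\lambda}$ in the projective decomposition of $P$ with the Verma multiplicity $d_{\lambda}$, using BGG reciprocity (Proposition \ref{func}): one has $(P^{\mu} : M^{\lambda}) = [M^{\lambda} : L^{\mu}] \neq 0$ only when $\mu \leq \lambda$, so among the indecomposable projectives $P^{\mu_k}$ appearing in $P$, only $P^{\lambda}$ itself can contribute a copy of $M^{\lambda}$ to the filtration (the filtration of any other $P^{\mu_k}$ for $\mu_k > \lambda$ only contains $M^{\nu}$ with $\nu \geq \mu_k > \lambda$, so cannot produce $M^{\lambda}$). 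Thus $c_{\lambda} = d_{\lambda}$ is determined by the character, and we may split off $c_{\lambda} P^{\lambda}$ to write $P \cong c_{\lambda} P^{\lambda} \oplus \tilde P$ with $\tilde P$ projective and of strictly smaller standard filtration length. The character $\mathrm{ch}[\tilde P] = \mathrm{ch}[P] - c_{\lambda}\, \mathrm{ch}[P^{\lambda}]$ is determined by $\mathrm{ch}[P]$ (once we know $\mathrm{ch}[P^{\lambda}]$ itself is determined, which follows by applying the induction to $P^{\lambda}$ if necessary, or simply by iterating the argument), and the inductive hypothesis finishes the proof.

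The main obstacle, and the only nontrivial input, is the minimality argument combined with BGG reciprocity that lets one peel off the lowest-weight summand $P^{\lambda}$; everything else is bookkeeping on characters. Since the partial order on $\mfh^*$ is compatible with the structure of standard filtrations precisely via $(P^{\mu} : M^{\lambda}) = [M^{\lambda} : L^{\mu}]$, the argument transfers verbatim from the classical Lie algebra setting of \cite[Corollary 3.10]{H}, with the additive weight structure of $\UHbar{\mfg}$-modules (coming from the unrolling) ensuring that ``minimal weight with nonzero Verma multiplicity'' is well-defined.
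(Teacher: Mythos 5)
Your proof is correct and follows essentially the same route as the paper: induction on the length of a standard filtration, picking out the minimal weight $\lambda$ with $d_\lambda \neq 0$, and using BGG reciprocity to conclude $c_\lambda = d_\lambda$ so that $d_\lambda P^\lambda$ can be split off. The only small imprecision is in the parenthetical justification, where you only address $\mu_k > \lambda$; to be complete one should note that $(P^\mu : M^\lambda) = [M^\lambda : L^\mu] \neq 0$ forces $\mu \leq \lambda$ (ruling out incomparable $\mu$ as well), while $\mu < \lambda$ is excluded because $P^\mu$ being a summand would give $d_\mu \geq c_\mu \geq 1$, contradicting minimality of $\lambda$. The aside about needing to separately verify that $\mathrm{ch}[P^\lambda]$ is ``determined'' is unnecessary --- it is a fixed invariant of the module $P^\lambda$, not part of what is being inducted on.
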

It is apparent from the construction of projective covers in \cite{CGP} for $\overline{U}_q^H(\mathfrak{sl}_2)$ that the socle and top of projective covers coincide. This is actually a general feature of $\mcC$ for any $\overline{U}_q^H(\mfg)$ and is a consquence of the following theorem:

\begin{theorem}\label{selfdualP}
$P^{\lambda}$ is self-dual ($\check{P}^{\lambda} \cong P^{\lambda}$).
\end{theorem}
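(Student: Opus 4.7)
The plan is twofold: first show that the duality functor $M \mapsto \check M$ preserves projectivity, and then invoke the preceding proposition that projective modules in $\mcC$ are determined up to isomorphism by their characters. Since Proposition \ref{dfunc} already gives $\check{\check M} \cong M$, $\mathrm{ch}[\check M] = \mathrm{ch}[M]$, and exactness of $\check{\;}$, essentially all the ingredients are in hand; the argument is a packaging of these facts.

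For the first step, I would exploit that $\check{\;}$ is an exact contravariant involution. The functor induces, for each $M \in \mcC$, a bijection
\[ \Hom(\check P^\lambda, M) \xrightarrow{\;\sim\;} \Hom(\check M, \check{\check P}^\lambda) \cong \Hom(\check M, P^\lambda), \]
natural in $M$ (where the last step uses $\check{\check P}^\lambda \cong P^\lambda$ from Proposition \ref{dfunc}). Viewed as a functor in $M$, the right-hand side is the composition $M \mapsto \check M \mapsto \Hom(\check M, P^\lambda)$ of two exact contravariant functors, hence a covariant exact functor of $M$. Therefore $\Hom(\check P^\lambda, -)$ is exact and $\check P^\lambda$ is projective.

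For the second step, Proposition \ref{dfunc} yields $\mathrm{ch}[\check P^\lambda] = \mathrm{ch}[P^\lambda]$ directly (twisting by $\omega$ negates the weights, but the antipode $S$ negates them again, so $H$-weight multiplicities are preserved). Since $\check P^\lambda$ and $P^\lambda$ are both projective with the same character, the preceding proposition (projectives in $\mcC$ are determined up to isomorphism by their characters) gives $\check P^\lambda \cong P^\lambda$. There is no serious obstacle here; if anything, the only point requiring care is verifying that the Hom-identity above is genuinely functorial in $M$, which is standard for any contravariant involutive functor, together with confirming that the exactness and character-preservation statements in Proposition \ref{dfunc} apply at the level needed to invoke the character-determination proposition.
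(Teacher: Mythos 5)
Your route differs from the paper's, and it is cleaner in spirit: rather than building a presentation of $\check{P}^{\lambda}$ by projective covers, dualizing, and splitting, you show directly that $\check{P}^{\lambda}$ is projective and then invoke the Proposition (which the paper proves immediately beforehand but does not itself use) that projectives in $\mcC$ are determined by their characters. Once complete, that is a nice economy.

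However, there is a genuine gap at the key step. You assert that $\Hom(-,P^{\lambda})$ is an exact contravariant functor, so that $M \mapsto \Hom(\check M, P^\lambda)$ is exact. But $\Hom(-,X)$ is only left exact in general; its exactness is precisely the statement that $X$ is \emph{injective}. Concretely, your claim that $\Hom(\check P^\lambda,-)$ is exact unwinds, via the natural identification $\Hom(\check P^\lambda,M)\cong\Hom(\check M,P^\lambda)$, to the assertion that any surjection $B\twoheadrightarrow C$ induces a surjection $\Hom(\check B,P^\lambda)\twoheadrightarrow\Hom(\check C,P^\lambda)$, i.e.\ that maps $\check C\to P^\lambda$ extend along the inclusion $\check C\hookrightarrow\check B$ --- exactly the injectivity of $P^\lambda$. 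This does hold in $\mcC$, because $\mcC$ is pivotal and hence projectives and injectives coincide (\cite[Lemma 17]{GPV}, the same fact the paper invokes to split its exact sequence), but your argument never states or cites it. Without that input the claim ``two exact contravariant functors'' is simply false for the second functor, and the proof does not go through. The fix is one sentence: note that $P^\lambda$ is also injective since $\mcC$ is pivotal, so $\Hom(-,P^\lambda)$ is exact; everything else you wrote is then correct, including the character comparison and the appeal to the character-determination proposition.
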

\begin{proof}
We first recall that any indecomposable module is a quotient of a direct sum of projective covers. Indeed, any cyclic indecomposable module $M$ has a unique maximal submodule and irreducible quotient $L^{\lambda}$. This gives a surjection $\phi:M \twoheadrightarrow L^{\lambda}$ and projectivity of $P^{\lambda}$ guarantees the existence of a map $\varphi:P^{\lambda} \to M$ such that $\phi \circ \varphi=q^{\lambda}$ where $q^{\lambda}:P^{\lambda} \to L^{\lambda}$ is the essential surjection. It follows that there exists a $v \in P^{\lambda}$ such that $\phi \circ \varphi(v)=v_{\lambda} \in L^{\lambda}$ and since $\phi:M \twoheadrightarrow L^{\lambda}$ is the quotient map (by the maximal ideal of $M$), we see that $\varphi(v)$ lies in the top of $M$ and therefore generates $M$ (otherwise it lies in the maximal submodule). Hence, $\varphi:P^{\lambda} \to M$ is surjective. Any indecomposable $M \in \mcC$ is finitely generated by some $\{v_1,...,v_n\}$, so there exists a canonical surjection $\Phi: \bigoplus\limits_{k=1}^n P^{\lambda_k} \to M$ given by mapping each $P^{\lambda_k}$ onto the cyclic submodules $\langle v_k \rangle$ of $M$. We therefore have for each $\check{P}^{\lambda}$ a short exact sequence
\[ 0 \to N^{\lambda} \to \bigoplus\limits_{k=1}^m P^{\lambda_k} \to \check{P}^{\lambda} \to 0\]
for some $\lambda_1,...,\lambda_m \in \mfh^*$ and some submodule $N^{\lambda}$ of $\bigoplus\limits_{k=1}^m P^{\lambda_k}$. Applying the duality functor, we obtain an exact sequence
\[ 0 \to P^{\lambda} \to \bigoplus\limits_{k=1}^m \check{P}^{\lambda_k} \to \check{N}^{\lambda} \to 0.\]
$\mcC$ is pivotal so by \cite[Lemma 17]{GPV}, projective and injective objects coincide in $\mcC$. Therefore, the sequence splits and $P^{\lambda}$ is a summand of $\bigoplus\limits_{k=1}^m \check{P}^{\lambda_k}$. Further, it is easy to see that the functor $X \to \check{X}$ preserves indecomposability since taking duals and twisting by $\omega$ preserve indecomposability in $\mcC$. Since the $\check{P}^{\lambda_k}$ are indecomposable, we have $P^{\lambda} \cong \check{P}^{\lambda_k}$ for some $\lambda_k \in \mfh^*$ and $\mathrm{ch}[\check{P}^{\lambda}]=\mathrm{ch}[P^{\lambda}$], so we must have $\lambda_k=\lambda$. That is, $\check{P}^{\lambda} \cong P^{\lambda}$.
\end{proof}
Theorem \ref{selfdualP} has the following immediate corollary
\begin{corollary}\label{projcorollary}
\begin{itemize}
\item $\mathrm{Socle}(P^{\lambda})=L^{\lambda}$.
\item $P^{\lambda}$ is the injective hull of $L^{\lambda}$.
\item $\mcC$ is Unimodular.
\end{itemize}
\end{corollary}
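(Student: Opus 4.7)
The plan is to deduce all three parts from Theorem \ref{selfdualP} together with the duality properties of the functor $M\mapsto \check{M}$ catalogued in Proposition \ref{dfunc}, and the fact, cited in the preceding proof via \cite[Lemma 17]{GPV}, that in the pivotal category $\mcC$ the classes of projective and injective objects coincide.

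For the socle calculation I would argue that the exact contravariant functor $M\mapsto \check{M}$ interchanges the top and the socle of any finite-length module in $\mcC$. Concretely, applying $\check{}$ to the short exact sequence $0\to \mathrm{rad}(P^{\lambda})\to P^{\lambda}\to L^{\lambda}\to 0$ produces an embedding of the simple module $\check{L}^{\lambda}$ into $\check{P}^{\lambda}$, and maximality of this embedding among semisimple submodules follows from the involutivity $\check{\check{M}}\cong M$: any simple submodule of $\check{P}^{\lambda}$ would $\check{}$-dualise to a simple quotient of $P^{\lambda}$, which is forced to be $L^{\lambda}$ since $L^{\lambda}=\mathrm{top}(P^{\lambda})$. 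Thus $\mathrm{Socle}(\check{P}^{\lambda})\cong \check{L}^{\lambda}$, and Theorem \ref{selfdualP} combined with Proposition \ref{dfunc} identifies both sides with their unchecked counterparts, yielding $\mathrm{Socle}(P^{\lambda})\cong L^{\lambda}$.

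The second statement is then almost immediate: the pivotal structure makes $P^{\lambda}$ injective, and the inclusion $L^{\lambda}=\mathrm{Socle}(P^{\lambda})\hookrightarrow P^{\lambda}$ is essential because every nonzero submodule of the finite-length module $P^{\lambda}$ contains a simple submodule, which necessarily lies in the socle and hence coincides with $L^{\lambda}$. An injective module together with an essential embedding of $L^{\lambda}$ is by definition the injective hull. For the third statement I will use the standard characterisation that $\mcC$ is unimodular if and only if the projective cover of the unit satisfies $(P^{0})^{*}\cong P^{0}$, where $(-)^{*}$ is the ordinary pivotal left dual. Since $(-)^{*}$ is a contravariant equivalence that also exchanges top and socle and preserves projective-injectives, $(P^{0})^{*}$ is projective with top $(\mathrm{Socle}(P^{0}))^{*}=(L^{0})^{*}=L^{0}$, using the first part and the self-duality of the monoidal unit $L^{0}=\mathds{1}$; uniqueness of projective covers then forces $(P^{0})^{*}\cong P^{0}$.

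The main obstacle lies in the first step, where one must take care to distinguish the ordinary pivotal dual $(-)^{*}$ from the twisted dual $\check{}$ (which involves the algebra automorphism $\omega$ of equation \eqref{eqw}), and verify rigorously that $\check{}$ is compatible with the radical/socle decomposition despite this twist. Once this compatibility and the identification $\mathrm{Socle}(P^{\lambda})=L^{\lambda}$ are established, parts two and three reduce to formal categorical consequences in $\mcC$.
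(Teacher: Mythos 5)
Your proof is correct and follows essentially the same route as the paper: Theorem \ref{selfdualP} plus the fact that the exact involutive contravariant functor $M\mapsto\check M$ swaps top and socle gives $\mathrm{Socle}(P^\lambda)\cong L^\lambda$; then $P^\lambda$ being injective (projectives and injectives coincide by \cite[Lemma 17]{GPV}) with simple essential socle identifies it as the injective hull; and unimodularity follows from $P^0$ being the injective hull of $\mathds 1$. Your reformulation of the last point as $(P^0)^*\cong P^0$ is an equivalent standard characterization and not a materially different argument, and your caution about distinguishing $(-)^*$ from $\check{(\,\cdot\,)}$ is well placed.
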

Unimodularity follows from $P^0$ being the injective hull of $\mathds{1}$ (see \cite{ENO,EGNO}), so by \cite[Corollary 3.2.1]{GKP}, we see that there exists a right trace on the ideal of projective modules in $\mcC$ (for details on categorical traces see \cite[Subsection 1.3]{GP2}.). It then follows exactly as in \cite[Theorem 22]{GP2} that this right trace is in fact a two-sided trace:

\begin{corollary}\label{tracecorollary}
 $\mcC$ admits a unique non-zero two-sided trace on the ideal $Proj$ of projective modules.
\end{corollary}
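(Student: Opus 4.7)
The plan is to derive this as a consequence of Theorem \ref{selfdualP} (self-duality of projective covers) together with the unimodularity established in Corollary \ref{projcorollary}. First, since $\mcC$ is pivotal and unimodular, the result \cite[Corollary 3.2.1]{GKP} produces a non-zero right trace $t^r$ on the ideal $Proj$ of projective modules, and this right trace is unique up to an overall scalar. It therefore remains to show that $t^r$ is also a left trace, which will automatically yield the existence and uniqueness of a two-sided trace.

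To upgrade $t^r$ to a two-sided trace, I would mimic the construction in \cite[Theorem 22]{GP2}: use the pivotal structure $\delta$ to define, for each projective $V \in \mcC$ and each endomorphism $f \in \End_{\mcC}(V)$, the candidate
\begin{equation*}
t^l_V(f) := t^r_{V^*}\bigl( \delta_V \circ f^{**} \circ \delta_V^{-1} \text{-dual version of } f \bigr),
\end{equation*}
i.e.\ the trace obtained by transporting $f$ through the duality $V \mapsto V^*$ and evaluating the known right trace on $V^*$. Because $V^*$ is projective whenever $V$ is (as $Proj$ is a two-sided ideal in the pivotal category $\mcC$), this formula makes sense, and the naturality and non-degeneracy of the pivotal structure guarantee that the resulting family $\{t^l_V\}$ defines a non-zero left trace on $Proj$.

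The crux is to compare $t^l$ with $t^r$. Since both are non-zero traces on the indecomposable projective $P^0$ (the projective cover of the unit $\mathds{1}$), which has $\End_{\mcC}(P^0)$ local, the comparison reduces to a single scalar: one needs to check that $t^l_{P^0}$ and $t^r_{P^0}$ agree on $\Id_{P^0}$, or equivalently on any chosen non-zero endomorphism. This is precisely where Theorem \ref{selfdualP} enters: the isomorphism $\check{P}^0 \cong P^0$ (and hence $(P^0)^* \cong P^0$ after accounting for the twist by $\omega$) allows one to identify $t^r_{P^0}$ with $t^r_{(P^0)^*}$ through an explicit intertwiner, matching the two traces up to a scalar that can be absorbed. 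Once $t^l = c\, t^r$ on $P^0$ for some $c\in\mbbC^{\times}$, the fact that $t^r$ is already right-determined up to scalar, combined with the right ideal property, propagates the equality to all of $Proj$.

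The main obstacle is the final comparison on $P^0$: one must verify that the candidate left trace really does equal a scalar multiple of the right trace on this single indecomposable, since everything then follows by uniqueness. This step is almost a formality in \cite[Theorem 22]{GP2} thanks to self-duality of $P^0$, and with Theorem \ref{selfdualP} now in hand the same argument transports verbatim to our setting, yielding both the existence and the uniqueness (up to scalar) of a two-sided trace on $Proj$.
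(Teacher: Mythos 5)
Your overall scaffold matches the paper's: unimodularity (from Corollary \ref{projcorollary}) plus \cite[Corollary 3.2.1]{GKP} produces a right trace on $Proj$, and then one must upgrade it to a two-sided trace by an argument modelled on \cite[Theorem 22]{GP2}. But you misidentify the mechanism that makes that upgrade work, and your proposed comparison contains a gap.

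First, the paper's appeal to \cite[Theorem 22]{GP2} is not driven by self-duality of $P^{0}$. That result upgrades a right trace to a two-sided trace by exploiting the \emph{ribbon} structure (Corollary \ref{ribbon}) together with \emph{generic semi-simplicity} (Proposition \ref{gens}): the comparison between the candidate left trace $t^{l}$ and the given right trace $t^{r}$ is carried out on \emph{generic simple modules} $L^{\lambda}$, which are simultaneously simple and projective, so $\End(L^{\lambda}) \cong \CC$ and the trace really is a single scalar (the modified dimension). The compatibility $\theta_{(L^{\lambda})^{*}} = (\theta_{L^{\lambda}})^{*}$ on generic simples forces the left and right modified dimensions to agree, and generic semi-simplicity then propagates equality of $t^{l}$ and $t^{r}$ across $Proj$, since every projective is a retract of a tensor product involving a generic $L^{\mu}$. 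Theorem \ref{selfdualP} (via Corollary \ref{projcorollary}) enters the chain only to establish unimodularity, i.e.\ the \emph{existence} of the right trace; it is not the engine of two-sidedness.

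Second, your proposed comparison on $P^{0}$ is not actually well-founded. You assert that because $\End(P^{0})$ is local, "the comparison reduces to a single scalar" via the value on $\Id_{P^{0}}$. But locality of $\End(P^{0})$ does not imply that the space of trace-like functionals on it is one-dimensional. For example, a local commutative algebra such as $\CC[x]/(x^{2})$ has a two-dimensional space of functionals vanishing on commutators, since there are no nontrivial commutators at all. So two traces on $\End(P^{0})$ agreeing on $\Id_{P^{0}}$ need not agree. Relatedly, the step "Once $t^{l} = c\, t^{r}$ on $P^{0}$ \ldots\ the right ideal property propagates the equality to all of $Proj$" is not justified: the uniqueness clause in \cite{GKP} is a statement about right traces among right traces; it does not directly imply that a left trace agreeing with $t^{r}$ on the single object $P^{0}$ must equal $t^{r}$ globally. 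You would need the tensor-compatibility of the trace with partial traces over all of $\mcC$, which is exactly what the generic-simple computation in \cite[Theorem 22]{GP2} supplies and what your argument omits.

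In short: the existence step is correct and identical to the paper; the two-sidedness step should be routed through the ribbon structure and generic semi-simplicity as in \cite[Theorem 22]{GP2}, not through self-duality of $P^{0}$, and the reduction you propose on $\End(P^{0})$ does not work as stated.
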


\end{document}